\theoremstyle{definition}{
\newtheorem{Def}{{\rm Definition}}
\newtheorem{Ex}{{\rm Example}}
\newtheorem{Rem}{{\rm Remark}}

}
\theoremstyle{plain}
{
\newtheorem{Cor}{Corollary}
\newtheorem{Prop}{Proposition}
\newtheorem{Thm}{Theorem}
\newtheorem{MainThm}{Main Theorem}

}
\begin{document}
\title[$3$-dimensional complex projective spaces have no special generic maps]{Proofs of the non-existence of special generic maps on the $3$-dimensional complex projective space}
\author{Naoki Kitazawa}
\keywords{Special generic maps. (Co)homology. Projective spaces. Closed and simply-connected manifolds. \\
\indent {\it \textup{2020} Mathematics Subject Classification}: Primary~57R45. Secondary~57R19.}
\address{Institute of Mathematics for Industry, Kyushu University, 744 Motooka, Nishi-ku Fukuoka 819-0395, Japan\\
 TEL (Office): +81-92-802-4402 \\
 FAX (Office): +81-92-802-4405 \\
}
\email{n-kitazawa@imi.kyushu-u.ac.jp, naokikitazawa.formath@gmail.com}
\urladdr{https://naokikitazawa.github.io/NaokiKitazawa.html}

\begin{abstract}
We prove the non-existence of {\it special generic} maps on $3$-dimensional complex projective space as our new result and a corollary by several methods.
{\it Special generic} maps are generalizations of Morse functions with exactly two singular points on spheres and  
canonical projections of unit spheres are special generic. 
Our paper focuses on such maps on closed and simply-connected manifolds of classes containing the $3$-dimensional complex projective space.

The differentiable structures of spheres admitting special generic maps are known to be restricted strongly. Special generic maps on closed and simply-connected manifolds and projective spaces have been studied by various people including the author. 
The (non-)existence and construction are main problems. Studies on such maps on closed and simply-connected manifolds whose dimensions are greater than $5$ have been difficult.


\end{abstract}


\maketitle
\section{Introduction.}
\label{sec:1}
  
What are {\it special generic} maps? In short, Morse functions on spheres with exactly two singular points are regarded as simplest special generic maps. 
They play important roles in so-called Reeb's theorem, characterizing spheres topologically via Morse functions (Theorem \ref{thm:1} (\ref{thm:1.0})).
Canonical projections of unit spheres are also special generic.

We define a {\it special generic} map. 

First we introduce terminologies and notation on (smooth) manifolds and maps,
For a positive integer $k$, ${\mathbb{R}}^k$ denotes the $k$-dimensional Euclidean space where $\mathbb{R}:={\mathbb{R}}^1$. We regard this as a natural smooth manifold with a standard Euclidean metric as a Riemannian metric. For $x \in {\mathbb{R}}^k$, $||x||$ denotes the distance between $x$ and the origin $0 \in {\mathbb{R}}^k$. For a positive integer $k$, $D^k:=\{x \mid ||x|| \leq 1\} \subset {\mathbb{R}}^k$ denotes the $k$-dimensional unit disk, which is a $k$-dimensional compact, connected and smooth closed submanifold. $S^{k-1}:=\{x \mid ||x||=1\} \subset {\mathbb{R}}^{k}$ denotes the ($k-1$)-dimensional unit sphere, which is a ($k-1$)-dimensional closed smooth submanifold with no boundary. 
The $0$-dimensional unit sphere is a two-point set endowed with the discrete topology where $k=1$ and $k-1=0$. The ($k-1$)-dimensional unit sphere is connected for $k-1 \geq 1$.

It is well-known that every smooth manifold has the structure of a canonical PL manifold. We regard smooth manifolds as the canonical PL manifolds. It is also known that topological manifold is regarded as a CW complex.

For a manifold, a polyhedron $X$, or a CW complex, for example, let $\dim X$ denote the dimension of $X$, which is well-defined.

Let $c:X \rightarrow Y$ be a smooth map from a smooth manifold $X$ into another smooth manifold $Y$. A {\it singular} point $p \in X$ of a smooth map $c:X \rightarrow Y$ is a point where the rank of the differential ${dc}_p$ is smaller than $\min\{\dim X,\dim Y\}$. The {\it singular set} $S(c)$ of $c$ is defined as the set of all singular points of $c$.

A {\it diffeomorphism} is defined as a smooth map between two manifolds which is a homeomorphism with no singular points. A diffeomorphism from a manifold $X$ onto the same manifold is said to be a {\it diffeomorphism} on $X$. The diffeomorphism group of a smooth manifold $X$ is the group of all diffeomorphisms on $X$, topologized with the so-called {\it Whitney $C^{\infty}$ topology}. Whitney $C^{\infty}$ topologies are well-known to be natural topologies on sets of smooth maps between two manifolds. Consult \cite{golubitskyguillemin} for example. 

Two smooth manifolds are {\it diffeomorphic} if there exists a diffeomorphism from a manifold to the other manifold.
A {\it homotopy sphere} means a smooth manifold which is homeomorphic to a sphere. A {\it standard sphere} means a homotopy sphere which is diffeomorphic to a unit sphere. An {\it exotic sphere} means a homotopy sphere which is not a standard sphere. A {\it standard disk} is a smooth manifold diffeomorphic to a unit disk. 

\cite{milnor} is a pioneering  paper on an exotic sphere. This shows $7$-dimensional exotic spheres. \cite{kervairemilnor} is on homotopy spheres whose dimensions are greater than $4$. In short, exotic spheres are classified via abstract algebraic topological theory in dimensions greater than $4$. It is well-known that exotic spheres do not exist in dimensions $1$, $2$, $3$, $5$ and $6$. The existence or the non-existence of $4$-dimensional exotic spheres is an open problem. A smooth manifold which is homeomorphic to a unit disk and which is not a standard disk is still undiscovered. If it exists, then it must be $4$-dimensional.    

\begin{Def}
\label{def:1}
A smooth map $c:X \rightarrow Y$ from a smooth manifold $X$ with no boundary into another smooth manifold $Y$ with no boundary satisfying $\dim X \geq \dim Y$ is a {\it special generic} map if at each singular point $p$, it is locally represented by the form
$$(x_1, \cdots, x_{\dim X}) \mapsto (x_1,\cdots,x_{\dim Y-1},\sum_{j=1}^{\dim X-\dim Y+1} {x_{j+\dim Y-1}}^2)$$
for suitable coordinates. 
\end{Def}

Interestingly, the differentiable structures of spheres admitting special generic maps are known to be restricted strongly. 
As Example \ref{ex:1} shows later, manifolds seen as elementary such as ones represented as connected sums of finitely many manifolds represented as the products of two spheres admit such maps in considerable cases.
The existence or the non-existence and construction of special generic maps on closed and simply-connected manifolds and projective spaces have been studied by various people including the author. Our main theorem is as follows and our main result is proofs of this new fact. 

\begin{MainThm}
	\label{mthm:1}
The $3$-dimensional complex projective space does not admit {\it special generic maps} into Euclidean spaces.
\end{MainThm}

This manifold is also a $6$-dimensional smooth closed and simply-connected manifold. Note that classifications of closed and simply-connected manifolds whose dimensions are greater than $4$ have a long history. \cite{jupp, wall, zhubr, zhubr2} are on classifications of $6$-dimensional closed and simply-connected manifolds. 

The organization of our paper is as follows. In the second section, we review existing studies on special generic maps such as fundamental properties and the existence or the non-existence of special generic maps on projective spaces and closed and simply-connected manifolds, which are presented before shortly. It may be important to review special generic maps on homotopy spheres. However we omit this considering the main content of the present paper. We only introduce \cite{calabi, saeki, saeki2, wrazidlo, wrazidlo2} and preprints \cite{kitazawa1, kitazawa2, kitazawa3, kitazawa4} by the author, which review related theory. In the third section,
we review topological properties of complex projective spaces.
In the fourth section, we show Main Theorem \ref{mthm:1} as corollaries by three methods. As another result related to the third method, we have another main theorem.

\begin{MainThm}
\label{mthm:2}
There exists a family $\{M_{\lambda}\}_{\lambda \in \Lambda}$ of countably many $6$-dimensional {\rm Bott} manifolds enjoying the following two properties.
\begin{enumerate}
	\item $M_{{\lambda}_1}$ and $M_{{\lambda}_2}$ are not homeomorphic for any distinct two elements ${\lambda}_1, {\lambda}_2 \in \Lambda$.
	\item $M_{\lambda}$ does not admit special generic maps into ${\mathbb{R}}^n$ for any $\lambda \in \Lambda$ and $n=1,2,3,4,5,6$.
\end{enumerate}
\end{MainThm}

Shortly, the class of ({\it generalized}){\it Bott manifolds} is an important class of closed and simply-connected smooth manifolds which are so-called {\it toric} manifolds. 
The products of finitely many $2$-dimensional (standard) spheres are Bott manifolds. Complex projective spaces are generalized Bott manifolds and they are not Bott-manifolds if they are not diffeomorphic to a $2$-dimensional sphere or the $1$-dimensional complex projective space. 
Related exposition and our proof are presented in Example \ref{ex:2} (\ref{ex:2.2}).

The fifth section is devoted to related remarks. We also present another main theorem as Main Theorems \ref{mthm:3} and \ref{mthm:4}.

The present paper includes similar descriptions as some of descriptions in \cite{kitazawa3} for example. \cite{kitazawa3} also studies special generic maps on closed and simply-connected manifolds whose dimensions are greater than $5$ or $6$. Manifolds whose dimensions are at least $5$ are said to be higher dimensional in general. 

\section{Some existing studies on special generic maps.}
\begin{Prop}
\label{prop:1}
For a special generic map in Definition \ref{def:1}, we have the following properties.
\begin{enumerate}
\item The singular set is an {\rm (}$n-1${\rm )}-dimensional smooth closed submanifold with no boundary.
\item The restriction of the map to the singular set is a smooth immersion.
\item For suitable coordinates, around each singular point, the map is represented as the product map of a Morse function and the identity map on a small open neighborhood of the singular point where the neighborhood is considered in the singular set.
\end{enumerate}
\end{Prop}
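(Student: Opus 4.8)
The plan is to analyze the local normal form in Definition \ref{def:1} directly and extract the three asserted structural facts from it. Fix a special generic map $c : X \to Y$ with $\dim X = m$, $\dim Y = n$. First I would examine the singular set. At a singular point $p$, the local form is $(x_1,\dots,x_m) \mapsto (x_1,\dots,x_{n-1}, \sum_{k=n}^{m} x_k^2)$; the differential has rank $< \min\{m,n\} = n$ precisely when $\sum_{k=n}^m x_k^2$ fails to contribute an independent differential, i.e. when $x_n = \dots = x_m = 0$. Hence in these coordinates $S(c)$ is cut out by the $m - n + 1$ equations $x_n = \dots = x_m = 0$, which is a coordinate subspace of dimension $n-1$; since this holds in a neighborhood of every singular point and $S(c)$ is clearly closed in $X$ (the rank condition is closed) and has no boundary (it is locally a linear subspace, not a half-space), statement (1) follows.

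Next, for statement (2), I would restrict $c$ to $S(c)$ in the same chart. On $\{x_n = \dots = x_m = 0\}$ the map becomes $(x_1,\dots,x_{n-1}) \mapsto (x_1,\dots,x_{n-1},0)$, whose differential is injective, so the restriction $c|_{S(c)} : S(c) \to Y$ is an immersion. One should note that it need not be an embedding globally, only an immersion, which is all that is claimed. Finally, for statement (3), I would repackage the normal form: write the target coordinates near $c(p)$ as $(y_1,\dots,y_{n-1},y_n)$ and the source coordinates so that $(x_1,\dots,x_{n-1})$ parametrize a neighborhood in $S(c)$ while $(x_n,\dots,x_m)$ are normal directions. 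Then $c$ is visibly the product of the identity on the $S(c)$-neighborhood (sending $(x_1,\dots,x_{n-1}) \mapsto (y_1,\dots,y_{n-1})$) with the single-variable-target Morse function $(x_n,\dots,x_m) \mapsto \sum_{k=n}^m x_k^2$, which has an isolated nondegenerate critical point of index $0$ at the origin. That is exactly the assertion of (3).

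I do not expect a serious obstacle here: the entire proposition is a local statement and is essentially a bookkeeping exercise once one writes down the chart from Definition \ref{def:1} carefully. The one point requiring a little care is verifying that the locus where the rank drops is \emph{exactly} $\{x_n = \dots = x_m = 0\}$ and not a larger set — this amounts to checking that the Jacobian of $(x_1,\dots,x_{n-1},\sum x_k^2)$ has rank $n$ off that locus, which is immediate since the first $n-1$ rows already give the standard basis vectors and $\partial(\sum x_k^2)/\partial x_j = 2x_j$ is nonzero for some $j \geq n$ there. A second minor point is globalizing: the charts from the definition are given pointwise, so one must observe that the coordinate subspaces they produce patch to a genuine smooth submanifold structure on $S(c)$, which follows from the standard fact that a subset locally modeled on a linear subspace in suitable charts is a smooth submanifold. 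Beyond that the proof is routine.
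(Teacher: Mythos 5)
Your proof is correct, and it is exactly the standard argument: the paper states Proposition \ref{prop:1} without proof (treating it as a well-known consequence of the normal form in Definition \ref{def:1}), and your direct computation of the Jacobian of $(x_1,\dots,x_{n-1},\sum_{k=n}^{m}x_k^2)$, together with the observations on closedness and patching of the coordinate subspaces, is precisely how this is verified. No gaps.
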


Hereafter, we use terminologies and notions on bundles such as {\it fibers}, {\it structure groups}, {\it projections}, {\it sections} and {\it trivial bundles} without rigorous expositions for example.
A bundle is said to be a {\it smooth} bundle if its fiber is a smooth manifold and its structure group is a subgroup of the diffeomorphism group. A smooth bundle is a {\it linear} bundle whose fiber is an Euclidean space, a unit sphere, or a unit disk and whose structure group consists of linear transformations.
We also explicitly or implicitly apply arguments on bundles and characteristic classes of linear bundles. For linear bundles and more general bundles, see \cite{milnorstasheff,steenrod} for example. 

\begin{Prop}[E. g. \cite{saeki}]
\label{prop:2}
Let $m \geq n \geq 1$ be integers.
For a special generic map $f:M \rightarrow N$ on an $m$-dimensional closed manifold $M$ into an $n$-dimensional manifold $N$ with no boundary, the following properties are enjoyed.
\begin{enumerate}
\item \label{prop:2.1}
There exists an $n$-dimensional compact manifold $W_f$ and a smooth immersion $\bar{f}:W_f \rightarrow N$.
\item \label{prop:2.2}
There exists a smooth surjection $q_f:M \rightarrow W_f$ and $f=\bar{f} \circ q_f$.
\item \label{prop:2.3}
$q_f$ maps the singular set $S(f)$ of $f$ onto the boundary $\partial W_f \subset W_f$ as a diffeomorphism.
\item \label{prop:2.4}
 We have the following two bundles.
\begin{enumerate}
\item \label{prop:2.4.1}
For some small collar neighborhood $N(\partial W_f) \subset W_f$, the composition of the map $q_f {\mid}_{{q_f}^{-1}(N(\partial W_f))}$ onto $N(\partial W_f)$ with the canonical projection to $\partial W_f$ is the projection of a linear bundle whose fiber is the {\rm(}$m-n+1${\rm )}-dimensional unit disk. 
\item \label{prop:2.4.2}
The restriction of $q_f$ to the preimage of $W_f-{\rm Int}\ N(\partial W_f)$ is the projection of a smooth bundle over $W_f-{\rm Int}\ N(\partial W_f)$ whose fiber is an {\rm (}$m-n${\rm )}-dimensional standard sphere.
\end{enumerate}
\end{enumerate}
\end{Prop}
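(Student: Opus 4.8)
The plan is to realize $W_f$ as the \emph{Stein factorization} of $f$ and to extract every assertion from the two local normal forms of $f$. Declare $p\sim p'$ whenever $p$ and $p'$ lie in one connected component of a single fiber $f^{-1}(y)$, set $W_f:=M/\!\sim$, let $q_f\colon M\to W_f$ be the quotient map, and let $\bar f\colon W_f\to N$ be the map induced by $f$, so that $f=\bar f\circ q_f$. Since all the claims are local over $W_f$, it suffices to describe $q_f$ near an arbitrary point of $M$, and here Proposition \ref{prop:1} is the entire input: away from $S(f)$ the map $f$ is a submersion, and near a singular point, in suitable coordinates, $f$ is the product of the Morse function $y\mapsto\|y\|^{2}$ on ${\mathbb R}^{m-n+1}$ with the identity of ${\mathbb R}^{n-1}$.

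First I would install a smooth structure on $W_f$. Over $M\setminus S(f)$ a small ball about a value of $f$ is evenly covered by open sets mapped diffeomorphically, each of which is a single point of $W_f$; this makes $W_f\setminus q_f(S(f))$ a smooth $n$-manifold on which $q_f$ is a submersion and the induced map to $N$ is a local diffeomorphism. Near a singular point the normal form shows the fiber components are the spheres $\{\|y\|=\text{const}\}$, which $q_f$ collapses, so the quotient of ${\mathbb R}^{m-n+1}\times{\mathbb R}^{n-1}$ is the half-space $[0,\infty)\times{\mathbb R}^{n-1}$, with $q_f$ given by $(y,x)\mapsto(\|y\|^{2},x)$, a smooth map. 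A direct check that the two families of charts are compatible makes $W_f$ a smooth compact $n$-manifold with $\partial W_f=q_f(S(f))$; in the boundary charts $q_f|_{S(f)}$ is plainly a diffeomorphism onto $\partial W_f$, which is \ref{prop:2.3}, and $\bar f$ is smooth, being a local diffeomorphism in the interior and the obvious half-space inclusion near the boundary, hence an immersion --- this is \ref{prop:2.1} and \ref{prop:2.2}.

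For \ref{prop:2.4.1} I would choose a tubular neighborhood of the submanifold $S(f)\subset M$, and a metric adapted to the normal forms, so that the normal disk bundle $D(\nu)$ of $S(f)$ is identified with $q_f^{-1}$ of a collar $N(\partial W_f)$ of $\partial W_f$ in such a way that the composite $D(\nu)\xrightarrow{q_f}N(\partial W_f)\to\partial W_f$ is the bundle projection and $q_f$ is radial along the fibers; then $q_f^{-1}(N(\partial W_f))\to\partial W_f$ is precisely that normal disk bundle, a linear $D^{m-n+1}$-bundle. For \ref{prop:2.4.2}, note that $q_f^{-1}(W_f-\mathrm{Int}\,N(\partial W_f))$ misses $S(f)$, so $q_f$ there is a submersion, and it is proper because $M$ is compact; Ehresmann's fibration theorem then makes it a smooth fiber bundle, and restricting to the inner edge of the collar identifies its fiber with the boundary of a disk fiber from \ref{prop:2.4.1}, namely the standard sphere $S^{m-n}$.

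The step I expect to be the real obstacle is the gluing in the previous paragraph --- this is the heart of the Burlet--de Rham and Saeki arguments: one must select the tubular neighborhood of $S(f)$ and the collar of $\partial W_f$ compatibly with the fold normal form and patch the local models so that $q_f^{-1}(N(\partial W_f))$ is \emph{globally} the normal disk bundle with structure group reduced to the linear group; local triviality with disk fibers is easy, but pinning down the linear structure requires genuine care. Everything else --- the fiber-bundle claim via Ehresmann and the compatibility of the two chart systems --- is then routine. (Caveat: if $S(f)=\emptyset$, then $f$ is a submersion and the sphere-bundle fiber need not be standard, but this does not occur when $N$ is Euclidean, since a compact manifold admits no submersion onto ${\mathbb R}^{n}$.)
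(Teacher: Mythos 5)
The paper itself gives no proof of this proposition: for $m>n$ it is quoted from Saeki's paper, and for $m=n$ the author defers to Eliashberg's theory. Your outline is exactly the Stein-factorization argument that Saeki (and Burlet--de Rham for $n=2$) carries out, including the identification of the collar preimage with the normal disk bundle of $S(f)$; for $m>n$ this is the right route, and you have correctly isolated the one genuinely delicate point, namely reducing the structure group of that disk bundle to the linear group by means of the definite-fold normal form and a choice of adapted metric. That step is flagged rather than executed, which is acceptable for an outline, but it is the part a complete write-up would have to supply.

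There is, however, one genuine gap: the construction does not prove the statement in the case $m=n$, which the proposition explicitly includes. When $m=n$ the normal form is $(x_1,\dots,x_n)\mapsto(x_1,\dots,x_{n-1},x_n^2)$, the local ``fiber sphere'' is $S^0$, and its two points lie in \emph{different} connected components of the fiber; the relation ``same connected component of a fiber'' therefore identifies nothing, so $M/\!\sim$ is $M$ itself, $q_f$ is the identity, $\partial W_f$ is empty, and $\bar f=f$ fails to be an immersion along $S(f)$. Concretely, for the height function on $S^1$ your recipe returns $W_f=S^1$, whereas the proposition requires $W_f$ to be the closed interval $[\min f,\max f]$ with $q_f=f$ two-to-one over the interior and the preimage of a boundary collar a $D^1$-bundle over the two fold points. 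One must instead take the quotient by the coarser relation generated by the local fold identifications $(x,y)\sim(x,-y)$ (equivalently, build $W_f$ from the local images, as in Eliashberg's theory, which is how the paper disposes of $m=n$). Your closing caveat is in the right direction but slightly too weak: since the proposition allows an arbitrary boundaryless $N$ and a possibly disconnected $M$, a component of $W_f$ can have empty boundary even when $S(f)\neq\emptyset$, and for such a component the standard-sphere claim genuinely requires an extra hypothesis (in the paper it is only ever applied with $N={\mathbb{R}}^n$, where your properness remark settles it).
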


This is explicitly presented in \cite{saeki} as a proposition in the case $m>n \geq 1$. 
The theory of special generic maps from $m$-dimensional closed manifolds into $n$-dimensional manifolds with no boundaries with $m=n \geq 1$ is essentially the theory of Eliashberg (\cite{eliashberg}).

Let $m \geq n \geq 1$ be integers again. Conversely, if we have a smooth immersion $\bar{f}:W_f \rightarrow N$ of an $n$-dimensional compact smooth manifold $W_f$ into an $n$-dimensional manifold $N$ with no boundary, we have a special generic map $f_0:M_0 \rightarrow N$ on a suitable $m$-dimensional closed manifold $M_0$ into $N$ enjoying the following properties.
\begin{enumerate}
\item There exists an $n$-dimensional compact manifold $W_{f_0}$ and a smooth immersion $\bar{f_0}:W_{f_0} \rightarrow N$.
\item There exists a diffeomorphism ${\phi}:W_{f} \rightarrow W_{f_0}$ satisfying $\bar{f}=\bar{f_0} \circ \phi$.
\item There exists a smooth surjection $q_{f_0}:M \rightarrow W_{f_0}$ and $f_0=\bar{f_0} \circ q_{f_0}$.
\item $q_{f_0}$ maps the singular set $S(f_0)$ of $f_0$ onto the boundary $\partial W_{f_0} \subset W_{f_0}$ as a diffeomorphism.
\item We have the following two bundles.
\begin{enumerate}
\item For some small collar neighborhood $N(\partial W_{f_0}) \subset W_{f_0}$, the composition of the map $q_{f_0} {\mid}_{{q_{f_0}}^{-1}(N(\partial W_{f_0}))}$ onto $N(\partial W_{f_0})$ with the canonical projection to $\partial W_{f_0}$ is the projection of a trivial linear bundle whose fiber is the {(\rm }$m-n+1${\rm )}-dimensional unit disk. 
\item The restriction of $q_{f_0}$ to the preimage of $W_{f_0}-{\rm Int}\ N(\partial W_{f_0})$ is the projection of a trivial smooth bundle over $W_{f_0}-{\rm Int}\ N(\partial W_{f_0})$ whose fiber is an {\rm (}$m-n${\rm )}-dimensional standard sphere.
\end{enumerate}
\end{enumerate}

This property is introduced as propositions in most of articles in REFERENCES by the author. This is also regarded as a fundamental exercise.

Hereafter, for a finite set $X$, $|X|$ denotes the size of $X$.

\begin{Ex}
\label{ex:1}
Let $m \geq n \geq 2$ be integers. Let $\{S^{k_j} \times S^{m-k_j}\}_{j \in J}$ be a family of finitely many products of two unit spheres where $k_j$ is an integer satisfying $1 \leq k_j \leq n-1$. We consider a connected sum of these $|J|$ manifolds in the smooth category. Let $M_0$ be an $m$-dimensional closed and connected manifold diffeomorphic to the obtained manifold. We have a special generic map $f_0$ so that $W_{f_0}$ is diffeomorphic to a manifold represented as a boundary connected sum of $|J|$ manifolds in $\{S^{k_j} \times D^{n-k_j}\}_{j \in J}$. Of course the boundary connected sum is considered in the smooth category. Furthermore, we can take $N={\mathbb{R}}^n$ and $f_0 {\mid}_{S(f_0)}$ as an embedding.

\end{Ex}
\begin{Prop}[E. g. \cite{saeki}]
\label{prop:3}
Let $m>n \geq 1$ be integers. 
For a special generic map $f:M \rightarrow N$ on an $m$-dimensional closed and connected manifold $M$ into an $n$-dimensional manifold $N$ with no boundary, we have the following properties.
\begin{enumerate}
	\item We have an {\rm (}$m+1${\rm )}-dimensional compact and connected topological manifold {\rm (PL)} manifold $W$ whose boundary {\rm (}resp. considered in the PL category{\rm )} is $M$ and which collapses to $W_f$ where $W_f$ is an $n$-dimensional compact and smooth manifold "$W_f$ in Proposition \ref{prop:2}" and identified with a suitable CW subcomplex {\rm (}resp. subpolyhedron{\rm )} of $W$.
	\item For the canonical inclusion $i_M:M \rightarrow W$ and a continuous {\rm (}resp. PL{\rm )} map $r_f:W \rightarrow W_f$ giving a collapsing to $W_f$, $q_f=r \circ i_M$ holds.
	\item If $M$ is oriented, then $W$ can be chosen as an oriented manifold. 
    \item If $m-n=1,2,3$ in addition, then $W$ can be chosen as a smooth manifold and $r$ as a smooth map.
    \end{enumerate}
\end{Prop}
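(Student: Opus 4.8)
\emph{Overall strategy.} The plan is to build $W$ as a thickening of the Stein factorisation $q_f\colon M\to W_f$ of Proposition~\ref{prop:2}, obtained by capping each $S^{m-n}$-fibre of $q_f$ with an $(m-n+1)$-disk; concretely I would take $W$ to be the mapping cylinder $\mathrm{Cyl}(q_f)=\bigl(M\times[0,1]\sqcup W_f\bigr)/\bigl((x,1)\sim q_f(x)\bigr)$, let $i_M\colon M\to W$ be the inclusion of $M\times\{0\}$, identify $W_f$ with its canonical copy in $W$, and let $r_f\colon W\to W_f$ be the standard deformation retraction. Then $q_f=r_f\circ i_M$ is immediate, which is~(2); and after choosing a smooth triangulation making $q_f$ simplicial, $W$ inherits a triangulation and the mapping cylinder of a simplicial map simplicially collapses onto its range, so $W$ collapses onto the subpolyhedron $W_f$, which — granting that $W$ is a manifold, done next — gives~(1).

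\emph{$W$ is a compact $(m+1)$-dimensional topological {\rm(}PL{\rm)} manifold with $\partial W=M$.} I would check this with the two local models of Proposition~\ref{prop:2}(\ref{prop:2.4}). Over the interior of $W_f-{\rm Int}\ N(\partial W_f)$, where $q_f$ is a smooth $S^{m-n}$-bundle, $W$ is locally $\mathrm{Cyl}(S^{m-n}\to\mathrm{pt})\times{\mathbb R}^{n}=D^{m-n+1}\times{\mathbb R}^{n}$, and globally over $W_f-{\rm Int}\ N(\partial W_f)$ this part of $W$ is the total space of the fibrewise cone, an honest bundle with fibre $D^{m-n+1}$ whose boundary sphere bundle is exactly $q_f^{-1}(W_f-{\rm Int}\ N(\partial W_f))\subset M$. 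Over the collar $N(\partial W_f)\cong\partial W_f\times[0,1]$, where $q_f$ is fibrewise the model $D^{m-n+1}\to[0,1]$, $v\mapsto||v||^{2}$, one checks the mapping cylinder is again a manifold with the cone points in its interior — already for $m-n=1$, $[-1,1]\times[0,1]/((v,1)\sim(-v,1))\cong{\mathbb R}^{2}$ via $z\mapsto z^{2}$, and the higher-dimensional case is the same computation fibrewise over the spheres $\{||v||=\text{const}\}$. Matching the two descriptions along their overlap, $W$ is a compact $(m+1)$-manifold whose only boundary stratum is $M\times\{0\}$, so $\partial W=M$; performing all identifications PL-ly gives the PL assertion.

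\emph{Orientation and smoothing.} For~(3): $W$ deformation retracts onto $W_f$, and $TW|_{W_f}\cong TW_f\oplus\nu$ where $\nu$ is the rank-$(m-n+1)$ (micro)bundle whose associated sphere bundle is the one in Proposition~\ref{prop:2}(\ref{prop:2.4.2}); since $M$ is that sphere bundle, $\pi^{*}\nu\cong T^{v}M\oplus{\mathbb R}$ gives $w_{1}(M)=\pi^{*}\bigl(w_{1}(W_f)+w_{1}(\nu)\bigr)$ with $\pi^{*}\colon H^{1}(W_f;{\mathbb Z}/2)\to H^{1}(M;{\mathbb Z}/2)$ injective (Gysin sequence, $m-n\ge1$), so $M$ orientable forces $w_{1}(W_f)+w_{1}(\nu)=0=w_{1}(W)$ and $W$ may be oriented compatibly with $M$. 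For~(4), when $m-n=1,2,3$ I would upgrade the fibrewise cone to a genuine smooth $D^{m-n+1}$-bundle extending $q_f$ over $W_f-{\rm Int}\ N(\partial W_f)$: this amounts to lifting the classifying map of the smooth $S^{m-n}$-bundle along $B\mathrm{Diff}(D^{m-n+1})\to B\mathrm{Diff}(S^{m-n})$, and since $\mathrm{Diff}(S^{k})\simeq O(k+1)\simeq\mathrm{Diff}(D^{k+1})$ for $k=1,2,3$ (classical; Smale; Hatcher) this map is a homotopy equivalence, so the lift exists; gluing in this smooth disk bundle, with the linear model of Proposition~\ref{prop:2}(\ref{prop:2.4.1}) over the collar, makes $W$ smooth and $r_f$ smooth.

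\emph{Main obstacle.} The hard part will be the manifold verification near $\partial W_f$ — checking rigorously that the mapping cylinder of the collar model $v\mapsto||v||^{2}$ is a manifold with the cone points interior, and that the interior and collar models agree along their overlap — together with the fact that the smoothing genuinely fails for $m-n\ge4$: there $\mathrm{Diff}(S^{m-n})$ is not (known to be) homotopy equivalent to $O(m-n+1)$ and the smooth sphere bundle need not bound a smooth disk bundle, which is precisely why~(1) only yields a topological/PL $W$ in general.
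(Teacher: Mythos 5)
Your construction is correct and is in substance the standard one from the sources the paper cites: the paper itself offers no proof of Proposition~\ref{prop:3} (it is stated with the attribution ``e.g.\ \cite{saeki}'' and pointers to \cite{saekisuzuoka,kitazawa0.1,kitazawa0.2,kitazawa0.3}), and the only internal description of $W$ appears later, in the proof of Theorem~\ref{thm:2}, as ``$M\times[-1,0]$ with handles of index $>m-n+1$ attached,'' which is exactly the handle-by-handle reading of your fibrewise cone over a handle decomposition of $W_f$. Your mapping-cylinder formulation, the local manifold check via the Alexander-trick cone over the sphere-bundle part and the fold model $v\mapsto\|v\|^{2}$ over the collar, and the smoothing for $m-n\le 3$ via the reduction of the structure group of the $S^{m-n}$-bundle to $O(m-n+1)$ (Smale, Hatcher) are all as in \cite{saeki}. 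Two small points deserve tightening. First, in the orientation step you write ``$M$ is that sphere bundle,'' but $M$ is only a sphere bundle over $W_f-{\rm Int}\,N(\partial W_f)$; the cleanest fix is to avoid the Gysin sequence altogether and argue that $i_M^{\ast}w_1(W)=w_1(M)=0$ while $i_M^{\ast}$ is injective on $H^1(\,\cdot\,;\mathbb{Z}/2\mathbb{Z})$ because $q_f=r_f\circ i_M$ is surjective on $\pi_1$ (the fibres of $q_f$ are connected for $m-n\ge1$) and $r_f$ is a homotopy equivalence. Second, the parenthetical $O(k+1)\simeq\mathrm{Diff}(D^{k+1})$ is not needed and is in fact not known (indeed false for $k+1=4$ by Watanabe's work on $\mathrm{Diff}(D^4,\partial)$); all your argument requires is that the sphere bundle reduces to a linear bundle and hence bounds the associated linear disk bundle, which is what Proposition~\ref{prop:2} (\ref{prop:2.4.1}) already provides over the collar and what the reduction to $O(m-n+1)$ provides over the rest.
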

Let $m > n \geq 1$ be integers again. Conversely, Suppose that we have a smooth immersion $\bar{f}:W_f \rightarrow N$ of an $n$-dimensional compact and connected smooth manifold $W_f$ into an $n$-dimensional manifold $N$ with no boundary. We have a suitable special generic map $f_0:M_0 \rightarrow N$ on a suitable $m$-dimensional closed and connected manifold $M_0$ into $N$ satisfying the properties just after Proposition \ref{prop:2} and we can have an {\rm (}$m+1${\rm )}-dimensional compact and connected smooth manifold $W$ and a smooth map $r:W \rightarrow W_f$ as in Proposition \ref{prop:3}.
For more general propositions of this type, see \cite{saekisuzuoka} and see papers \cite{kitazawa0.1,kitazawa0.2,kitazawa0.3} by the author.

Note also that we concentrate on the case $m-n=1,2,3$ in main ingredients and we discuss problems in the smooth category in situations of Proposition \ref{prop:3} in main ingredients of our paper.

The following two theorems give some characterizations of manifolds admitting special generic maps where the classes of the manifolds and the dimensions of the Euclidean spaces of the targets are fixed.

\begin{Thm}
\label{thm:1}
Let $m$ be a positive integer. We have the following characterizations where connected sums are considered in the smooth category.
\begin{enumerate}
\item {\rm (}Reeb's theorem{\rm )}
\label{thm:1.0}
An $m$-dimensional closed and connected manifold $M$ admits a special generic map into $\mathbb{R}$ if and only if either of the following two holds.
\begin{enumerate}
	\item $m \neq 4$ and $M$ is a homotopy sphere.
	\item $m=4$ and $M$ is a standard sphere.
\end{enumerate}
\item {\rm (\cite{saeki})}
\label{thm:1.1}
For $m \geq 2$, an $m$-dimensional closed and connected manifold $M$ admits a special generic map into ${\mathbb{R}}^2$ if and only if either of the following two holds.
\begin{enumerate}
\item $M$ is a homotopy sphere which is not a $4$-dimensional exotic sphere.
\item $M$ is diffeomorphic to a manifold represented as a connected sum of the total spaces of smooth bundles over $S^1$ whose fibers are diffeomorphic to homotopy spheres which are not $4$-dimensional exotic spheres.
\end{enumerate}
\item {\rm (\cite{saeki})}
\label{thm:1.2}
For $m=4,5$, an $m$-dimensional closed and simply-connected manifold $M$ admits a special generic map into ${\mathbb{R}}^3$ if and only if either of the following two holds.
\begin{enumerate}
\item $M$ is a standard sphere.
\item $M$ is diffeomorphic to a manifold represented as a connected sum of the total spaces of linear bundles over $S^2$ whose fibers are to the {\rm (}$m-2${\rm )}-dimensional unit sphere.
\end{enumerate}
\item {\rm (\cite{nishioka})}
\label{thm:1.3}
For $m=5$, an $m$-dimensional closed and simply-connected manifold $M$ admits a special generic map into ${\mathbb{R}}^4$ if and only if either of the following two holds.
\begin{enumerate}
\item $M$ is a standard sphere.
\item $M$ is diffeomorphic to a manifold represented as a connected sum of the total spaces of linear bundles over $S^2$ whose fibers are the {\rm (}$m-2${\rm )}-dimensional unit sphere.
\end{enumerate}
\end{enumerate}
\end{Thm}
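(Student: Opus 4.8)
The plan is to dispatch part~(\ref{thm:1.0}) --- Reeb's theorem --- first, as the classical prototype, and then to run a single mechanism for parts~(\ref{thm:1.1})--(\ref{thm:1.3}). By Definition~\ref{def:1} a special generic map $f\colon M^m\to\mathbb R$ is a Morse function each of whose critical points is a nondegenerate local extremum, so on a closed connected $M$ it has exactly one minimum and one maximum. Reeb's argument --- excise small disks around the two critical points and note that what remains is a product $S^{m-1}\times[0,1]$ --- then presents $M$ as a twisted sphere $D^m\cup_\varphi D^m$, hence as a homotopy sphere, and for $m=4$ as the standard $S^4$, since every self-diffeomorphism of $S^3$ extends over $D^4$ (Cerf). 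Conversely, for $m\ne4$ every homotopy sphere is a twisted sphere (Smale in dimensions at least $5$, and there are no exotic spheres in lower dimensions), and a twisted sphere --- like $S^4$ --- visibly admits a Morse function with two critical points; this is~(\ref{thm:1.0}).

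For parts~(\ref{thm:1.1})--(\ref{thm:1.3}) the engine is the Stein factorization of Proposition~\ref{prop:2} together with the converse construction recorded just after it (and illustrated in Example~\ref{ex:1}); I would assume $m>n$ throughout, the case $m=n=2$ of~(\ref{thm:1.1}) being Eliashberg's theory. Given a special generic $f\colon M\to\mathbb R^n$, factor $f=\bar f\circ q_f$ with $W_f$ a connected compact $n$-manifold immersed in $\mathbb R^n$, where $q_f$ is a smooth $S^{m-n}$-bundle over $W_f$ with a collar of $\partial W_f$ deleted and a linear $D^{m-n+1}$-bundle over that collar. Since $m>n$ the fibers of $q_f$ are connected, so $q_f$ is $\pi_1$-surjective; in the simply-connected cases~(\ref{thm:1.2})--(\ref{thm:1.3}) this forces $W_f$ to be simply connected, hence $W_f\cong D^2$ when $n=2$, while when $n=3$ the boundary of $W_f$ is a union of $2$-spheres, so capping them with $3$-balls and invoking the resolved Poincar\'e conjecture shows $W_f$ is $S^3$ with finitely many open balls removed, i.e.\ a boundary connected sum of copies of $S^2\times D^1$. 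For~(\ref{thm:1.1}) one instead uses that a compact surface with nonempty boundary immersed in $\mathbb R^2$ has trivial tangent bundle, hence is orientable with free fundamental group, so it is $D^2$ or a boundary connected sum of copies of $S^1\times D^1$. In each case $M$ is reassembled by gluing the $S^{m-n}$-bundle over $W_f$ to the $D^{m-n+1}$-bundle over $\partial W_f$ along their shared sphere-bundle boundary: a $D^2$ (resp.\ $D^3$) summand of $W_f$ contributes a twisted-sphere summand of $M$, while each $S^1\times D^1$ (resp.\ $S^2\times D^1$) summand, together with the disk bundles over its adjacent boundary components, contributes the total space of a bundle over $S^1$ (resp.\ over $S^2$) of exactly the type listed in~(\ref{thm:1.1})--(\ref{thm:1.3}), and a boundary-connected-sum splitting of $W_f$ becomes a connected-sum splitting of $M$; reading the bundles off their classifying maps over $S^1$ and $S^2$ then identifies the possibilities with the stated lists. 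The ``if'' directions follow by feeding the models $D^2$, $D^3$, boundary connected sums of $S^1\times D^1$, and boundary connected sums of $S^2\times D^1$ --- each of which plainly immerses in the relevant Euclidean space --- into the converse construction and taking the disk bundles over the boundary collars to be the prescribed (possibly nontrivial) linear bundles; Proposition~\ref{prop:3}(4) keeps everything in the smooth category because $m-n\le3$ here.

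The main obstacle, to my mind, is the forward rigidity in the middle step: one must show not only that $W_f$ is pinned down to the short list above --- for $n=3$ this genuinely consumes the $3$-dimensional Poincar\'e conjecture --- but also, and this is subtler, that the smooth $S^{m-n}$-bundle over the interior together with the linear $D^{m-n+1}$-bundles over $\partial W_f$ are constrained enough that the reassembled $M$ is exactly a connected sum of the total spaces of the listed \emph{linear} bundles, and not some more general plumbing. Interwoven with this are the $4$- and $5$-dimensional smoothing subtleties --- whether one may pass freely between the topological, PL and smooth categories, and whether $4$-dimensional exotic spheres can intrude --- which are precisely what the hypotheses ``$m=4,5$'' and ``$m=5$'' and the clauses about $4$-dimensional exotic spheres in~(\ref{thm:1.1})--(\ref{thm:1.3}) are designed to absorb. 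Controlling these is the delicate part, and is why the statements are confined to these dimension ranges.
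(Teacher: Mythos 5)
This is a quoted survey theorem: the paper itself gives no proof of Theorem~\ref{thm:1}, attributing the parts to Reeb, to \cite{saeki} and to \cite{nishioka}, so the comparison is really against those sources. For parts (\ref{thm:1.0})--(\ref{thm:1.2}) your outline is essentially the standard argument and is sound: the normal form forces all critical points of a special generic function into $\mathbb{R}$ to be extrema, connectedness forces exactly two, and the twisted-sphere discussion (with $\Gamma_4=0$ for $m=4$) is correct; for $n=2,3$ the Stein factorization plus the determination of $W_f$ (a compact orientable surface with boundary, resp.\ a punctured $S^3$ via the Poincar\'e conjecture) and the reassembly into sphere bundles over $S^1$ or $S^2$ is exactly Saeki's route. (Incidentally your mechanism yields $S^{m-2}$-bundles over $S^2$, which for $m=4$ means $S^2$-fibers; the paper's statement of part (\ref{thm:1.2}) says ``fibers diffeomorphic to $S^3$'' for both $m=4$ and $m=5$, which is a dimensional slip in the source, not in your argument.)

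The genuine gap is part (\ref{thm:1.3}). Your ``single mechanism'' rests on pinning $W_f$ down to a short list, and that step collapses when $n=4$: the admissible $W_f$ are simply-connected compact parallelizable $4$-manifolds with boundary, and this class is vastly larger than boundary connected sums of $D^4$ and $S^2\times D^2$. For instance, any plumbing of $D^2$-bundles over $S^2$ with even intersection form (the $E_8$-manifold, or the Euler-number-$2$ disk bundle over $S^2$ with boundary $\mathbb{R}P^3$) is simply connected, has trivial tangent bundle, hence immerses in $\mathbb{R}^4$ and is a legitimate candidate for $W_f$. Ruling these out --- or showing that the reassembled $M$ is either non-simply-connected or still lands in the stated list --- is precisely the content of \cite{nishioka}, whose proof does not classify $W_f$ at all but instead computes enough invariants of $M$ (freeness of $H_2$, the second Stiefel--Whitney class, the bounding $6$-manifold $W$ of Proposition~\ref{prop:3}) to invoke the Barden--Smale classification of closed simply-connected $5$-manifolds. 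Your proposal contains no substitute for that step, so as written it proves (\ref{thm:1.0})--(\ref{thm:1.2}) but not (\ref{thm:1.3}).
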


We introduce notation and terminologies on {\it homology groups}, {\it cohomology groups} and {\it rings} and {\it homotopy groups}. For elementary or advanced theory, consult \cite{hatcher} for example.

Let $(X,X^{\prime})$ be a pair of topological spaces satisfying $X^{\prime} \subset X$ where $X^{\prime}$ can be the empty set. 
Let $A$ be a commutative ring. The {\it homology group} ({\it cohomology group}) of the pair $(X,X^{\prime})$ of topological spaces satisfying $X^{\prime} \subset X$ is defined and denoted by $H_{\ast}(X,X^{\prime};A)$ (resp. $H^{\ast}(X,X^{\prime};A)$) where the {\it coefficient ring} is $A$. If $A$ is isomorphic to the ring of all integers, denoted by $\mathbb{Z} \subset \mathbb{R}$ (resp. the ring of all rational numbers, denoted by $\mathbb{Q} \subset \mathbb{R}$), then the homology group and the cohomology group are called the {\it integral} (resp. {\it rational}) {\it homology group} and the {\it integral} (resp. {\it rational}) {\it cohomology group}, respectively. The {\it $k$-th homology group} ({\it cohomology group}) is denoted by $H_k(X,X^{\prime};A)$ (resp. $H^k(X,X^{\prime};A)$). If $A$ is isomorphic to $\mathbb{Z}$ (resp. $\mathbb{Q}$), then we add "{\it integral}" (resp. "{\it rational}") after "$k$-th" as before. If $X^{\prime}$ is empty, then we may omit ",$X^{\prime}$" in the notation and the homology group (cohomology group) of the pair $(X,X^{\prime})$ is also called the {\it homology group} (resp. {\it cohomology group}) of $X$. We can define similarly for $k$-th homology groups and cohomology groups and integral and rational ones.
{\rm (}{\it Co}{\rm )}{\it homology classes} of $(X,X^{\prime})$ (or $X$) are elements of the (resp. co)homology groups. If the degree is $k$ for each element, then we add {\it "$k$-th"} as for the homology group and cohomology group. We can define similarly for integral or rational homology groups and cohomology groups.

The {\it $k$-th homotopy group} of a topological space $X$ is denoted by ${\pi}_k(X)$ where we can define the group.

Let $(X_1,{X_1}^{\prime})$ and $(X_2,{X_2}^{\prime})$ be pairs of topological spaces satisfying ${X_1}^{\prime} \subset X_1$ and ${X_2}^{\prime} \subset X_2$ where the second topological spaces of the pairs can be empty. For a continuous map $c:X_1 \rightarrow X_2$ satisfying $c({X_1}^{\prime}) \subset {X_2}^{\prime}$, $c_{\ast}:H_{\ast}(X_1,{X_1}^{\prime};A) \rightarrow H_{\ast}({X_2},{X_2}^{\prime};A)$ and $c^{\ast}:H^{\ast}({X_2},{X_2}^{\prime};A) \rightarrow H^{\ast}(X_1,{X_1}^{\prime};A)$ denote canonically induced homomorphisms. For a continuous map $c:X_1 \rightarrow X_2$, $c_{\ast}:{\pi}_k(X_1) \rightarrow {\pi}_k(X_2)$ also denotes the induced homomorphism between the homotopy groups of degree $k$. 

The cup products for an ordered pair $(c_1,c_2) \in H^{\ast}(X;A) \times H^{\ast}(X;A)$ and a sequence $\{c_j\}_{j=1}^l \subset H^{\ast}(X;A)$ of $l>0$ cohomology classes are important. $c_1 \cup c_2$ and ${\cup}_{j=1}^l c_j$ denote them. This gives $H^{\ast}(X;A)$ the structure of a graded commutative algebra over $A$ and this is the cohomology ring of $X$ whose coefficient ring is $A$.
\begin{Thm}[\cite{kitazawa}]
\label{thm:2}
Let $m>n \geq 1$ be integers. Let $l>0$ be another integer.
Let $M$ be an $m$-dimensional closed and connected manifold.
Let $A$ be commutative ring. Let there exist a sequence $\{a_j\}_{j=1}^l \subset H^{\ast}(M;A)$ satisfying the following three. 
\begin{itemize}
\item The cup product ${\cup}_{j=1}^l a_j$ is not the zero element.
\item The degree of each cohomology class in $\{a_j\}_{j=1}^l$ is smaller than or equal to $m-n$.
\item The sum of the degrees of the $l$ cohomology classes in $\{a_j\}_{j=1}^l$ is greater than or equal to $n$.
\end{itemize}
Then $M$ does not admit special generic maps into any $n$-dimensional connected manifold which is non-closed and which has no boundary.
\end{Thm}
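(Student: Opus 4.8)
The plan is to argue by contradiction: assuming a special generic map exists, I would show that the nonzero cup product ${\cup}_{j=1}^{l}a_{j}$ is pulled back from the associated Reeb space $W_{f}$, which is too low-dimensional to carry a nonzero class in that degree.

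Suppose $f\colon M\to N$ is special generic with $N$ an $n$-dimensional connected manifold which is non-closed and has no boundary; then $N$ is noncompact. Proposition~\ref{prop:2} supplies the compact smooth $n$-manifold $W_{f}$, the smooth surjection $q_{f}\colon M\to W_{f}$, and the immersion $\bar f\colon W_{f}\to N$ with $f=\bar f\circ q_{f}$. First I would note that $\partial W_{f}\neq\emptyset$: otherwise $\bar f$ would be an immersion between manifolds of the same dimension, hence an open map, so $\bar f(W_{f})$ would be open in $N$; it is also compact, hence closed, so $\bar f(W_{f})=N$ by connectedness and $N$ would be compact, a contradiction. Thus $W_{f}$ is a connected compact smooth $n$-manifold with non-empty boundary, so it is homotopy equivalent to a complex of dimension at most $n-1$; in particular $H^{j}(W_{f};A)=0$ for all $j\geq n$.

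The heart of the argument is the claim that $q_{f}^{\ast}\colon H^{j}(W_{f};A)\to H^{j}(M;A)$ is an isomorphism for every $j\leq m-n$. To prove it I would use the $(m+1)$-dimensional compact manifold $W$ of Proposition~\ref{prop:3}: it satisfies $\partial W=M$ and collapses onto $W_{f}$, say by a retraction $r_{f}\colon W\to W_{f}$, with $q_{f}=r_{f}\circ i_{M}$ for the inclusion $i_{M}\colon M\hookrightarrow W$. The collapse is a deformation retraction, so $r_{f}^{\ast}$ is an isomorphism and it is enough to show that $i_{M}^{\ast}\colon H^{j}(W;A)\to H^{j}(M;A)$ is an isomorphism for $j\leq m-n$; by the cohomology long exact sequence of the pair $(W,M)$ this is equivalent to $H^{j}(W,M;A)=0$ for $j\leq m-n+1$. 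Lefschetz duality for the compact manifold $W$ identifies $H^{j}(W,M;A)$ with a homology group of $W$ in complementary degree $m+1-j$ (with coefficients in the orientation local system if $W$ is non-orientable), and since $W$ is homotopy equivalent to $W_{f}$, which is homotopy equivalent to a complex of dimension at most $n-1$, this group vanishes once $m+1-j\geq n$, i.e.\ for $j\leq m-n+1$, as required. (The weaker statement that $q_{f}^{\ast}$ is an isomorphism in degrees $\leq m-n-1$ is classical and visible directly from Proposition~\ref{prop:2}, since over $\mathrm{Int}\,W_{f}$ the map $q_{f}$ is an $S^{m-n}$-bundle and $q_{f}^{-1}(\partial W_{f})=S(f)$ is a submanifold of $M$ of codimension $m-n+1$; the point of invoking Proposition~\ref{prop:3}, and of the hypothesis that $N$ be non-closed, is precisely to reach degree $m-n$, because non-closedness forces $\partial W_{f}\neq\emptyset$ and hence the vanishing of the relevant degree-$n$ homology of $W_{f}$.)

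Granting the claim, the theorem drops out: each $a_{j}$ has degree $d_{j}:=\deg a_{j}\leq m-n$, so there is $b_{j}\in H^{d_{j}}(W_{f};A)$ with $q_{f}^{\ast}(b_{j})=a_{j}$, and since $q_{f}^{\ast}$ is compatible with cup products
$$q_{f}^{\ast}\bigl({\cup}_{j=1}^{l}b_{j}\bigr)={\cup}_{j=1}^{l}q_{f}^{\ast}(b_{j})={\cup}_{j=1}^{l}a_{j}.$$
But ${\cup}_{j=1}^{l}b_{j}$ lies in $H^{d}(W_{f};A)$ with $d=\sum_{j=1}^{l}d_{j}\geq n$, and this group vanishes as observed above; hence ${\cup}_{j=1}^{l}a_{j}=q_{f}^{\ast}(0)=0$, contradicting the hypothesis. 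I expect the main obstacle to be the claim in degree exactly $m-n$: one must apply Lefschetz duality and the homotopy-dimension bound for $W_{f}$ with the correct (possibly twisted) coefficients, and use that $W$, although only guaranteed to be a topological or PL manifold when $m-n>3$, still satisfies Lefschetz duality and is genuinely homotopy equivalent to $W_{f}$. Everything in the lower degrees is routine and documented in the references cited in the paper.
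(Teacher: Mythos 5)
Your proposal is correct and follows essentially the same route as the paper: pull the classes $a_j$ back to the bounding manifold $W$ of Proposition \ref{prop:3} (equivalently to $W_f$) using the vanishing of $H^{j}(W,M;A)$ for $j\leq m-n+1$, and then kill the cup product because $W\simeq W_f$ has the homotopy type of an $(n-1)$-dimensional complex. The only difference is cosmetic: you obtain the vanishing of $H^{j}(W,M;A)$ via Lefschetz duality, whereas the paper reads it off from the dual handle decomposition of $W$ relative to $M=\partial W$ (handles of index greater than $m-n+1$); these are two formulations of the same fact, and your treatment of the possibly twisted coefficients and of $\partial W_f\neq\emptyset$ is sound.
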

We review a proof without exposition on a {\it handle} and its {\it index} for smooth manifolds and polyhedra including PL manifolds.
\begin{proof}
Suppose that $M$ admits a special generic map into an $n$-dimensional connected manifold $N$ which is not compact and which has no boundary. We can take an ($m+1$)-dimensional compact and connected PL manifold $W$ as in Proposition \ref{prop:3} (note that in the main ingredients we consider the case $m-n=1,2,3$ only and discuss problems in the smooth category). 
$W_f$ is a smooth compact and connected manifold smoothly immersed into the connected and non-compact manifold $N$ with no boundary. As a result it is (simple) homotopy equivalent to an ($n-1$)-dimensional compact and connected polyhedron. 
$W$ is (simple) homotopy equivalent to $W_f$. $W$ is shown to be a PL manifold obtained by attaching handles to $M \times \{0\} \subset M \times [-1,0]$ whose indices are greater than $(m+1)-{\dim W_f}=m-n+1$ where the boundary $\partial W=M$ can be identified with $M \times \{-1\} \subset M \times [-1,0]$.
We can take a unique cohomology class $b_j \in H^{\ast}(W;A)$ satisfying $a_j={i_M}^{\ast}(b_j)$ where $i_M$ is as in Proposition \ref{prop:3}. $W$ has the (simple) homotopy type of an ($n-1$)-dimensional polyhedron. This means that the cup product ${\cup}_{j=1}^l a_j$ is the zero element, which contradicts the assumption. This completes the proof.
\end{proof}
\begin{Cor}[\cite{kitazawa2}]
\label{cor:1}
Let $m \geq n \geq 1$ be integers. Let $N$ be an $n$-dimensional connected manifold which is not compact and which has no boundary. We have the following two.
\begin{enumerate}
\item
\label{cor:1.1}
 Suppose that $m>n$ in addition. Then the $m$-dimensional real projective space does not admit special generic maps into $N$. 
\item
\label{cor:1.2}
 Suppose also that $n<m-1$ and that $m$ is an even integer. The $\frac{m}{2}$-dimensional complex projective space, which is also an $m$-dimensional smooth, closed and simply-connected manifold, does not admit special generic maps into $N$.
\end{enumerate}
\end{Cor}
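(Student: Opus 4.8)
The plan is to deduce both assertions directly from Theorem \ref{thm:2}, in each case by writing down an explicit sequence of cohomology classes with nonzero cup product. First note that a connected manifold with no boundary is non-closed exactly when it fails to be compact, so the target $N$ in the statement is precisely of the kind allowed in Theorem \ref{thm:2}; moreover the standing hypothesis $m\geq n\geq 1$ together with $m>n$ (in part (\ref{cor:1.1})) or $n<m-1$ (in part (\ref{cor:1.2})) guarantees $m>n\geq 1$, so Theorem \ref{thm:2} is applicable throughout.

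For part (\ref{cor:1.1}), let $M$ be the $m$-dimensional real projective space and take $A=\mathbb{Z}/2\mathbb{Z}$. The mod $2$ cohomology ring is $H^{\ast}(M;\mathbb{Z}/2\mathbb{Z})\cong(\mathbb{Z}/2\mathbb{Z})[a]/(a^{m+1})$ with $a$ of degree $1$. Set $l=m$ and $a_j=a$ for $1\leq j\leq m$. Then ${\cup}_{j=1}^{l}a_j=a^m\neq 0$; the degree of each $a_j$ is $1\leq m-n$ since $m>n$; and the sum of the degrees equals $m\geq n$. Hence Theorem \ref{thm:2} yields that $M$ admits no special generic map into $N$. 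It is essential here to use $\mathbb{Z}/2\mathbb{Z}$ coefficients, since over $\mathbb{Q}$ the positive-degree cohomology of a real projective space vanishes.

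For part (\ref{cor:1.2}), let $M$ be the $\frac{m}{2}$-dimensional complex projective space, viewed as an $m$-dimensional closed, simply-connected smooth manifold, and take $A=\mathbb{Z}$. Its integral cohomology ring is $H^{\ast}(M;\mathbb{Z})\cong\mathbb{Z}[a]/(a^{\frac{m}{2}+1})$ with $a$ of degree $2$; note that $1\leq n<m-1$ forces $m\geq 4$, hence $\frac{m}{2}\geq 2$. Set $l=\frac{m}{2}$ and $a_j=a$ for $1\leq j\leq\frac{m}{2}$. Then ${\cup}_{j=1}^{l}a_j=a^{m/2}\neq 0$; the degree of each $a_j$ is $2\leq m-n$ since $n<m-1$; and the sum of the degrees equals $m\geq n$. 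Theorem \ref{thm:2} again gives the conclusion.

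The only delicate point is the bookkeeping of the numerical hypotheses of Theorem \ref{thm:2}: the requirement that each class have degree at most $m-n$ is exactly what forces $m>n$ in the real case and the strict inequality $n<m-1$ in the complex case, and these are precisely the assumptions in the statement. So there is no real obstacle here; the corollary is a formal consequence of Theorem \ref{thm:2} once the correct coefficient rings and cup-length are chosen.
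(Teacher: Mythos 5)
Your proposal is correct and is essentially the intended derivation: the paper states Corollary \ref{cor:1} immediately after Theorem \ref{thm:2} as a consequence of it (citing \cite{kitazawa2}), and the argument is exactly the cup-length computation you give, with $\mathbb{Z}/2\mathbb{Z}$ coefficients and the degree-one generator for $\mathbb{R}P^m$, and integral coefficients with the degree-two generator for $\mathbb{C}P^{m/2}$. Your bookkeeping of the hypotheses (degree $\leq m-n$ forcing $m>n$, resp. $n<m-1$, and the identification of ``non-closed with no boundary'' with ``non-compact with no boundary'') is accurate.
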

In Corollary \ref{cor:1} in the case $m=n$, \cite{eliashberg, kikuchisaeki} produce useful tools for example. Our exposition on Corollary \ref{cor:2} (Main Theorem) and Remarks \ref{rem:2} and \ref{rem:3} refer to some of the theory. 

\begin{Rem}
\label{rem:2}
A slide \cite{wrazidlo2} of a related talk in a conference shows a proof of Corollary \ref{cor:1} for the $7$-dimensional real projective space. This is based on the theory on $7$-dimensional homotopy spheres and special generic maps on them, presented shortly in the first section, before the appearance of \cite{kitazawa2}. After \cite{kitazawa2} appeared, \cite{wrazidlo3} announced another proof. It investigates restrictions on the torsion group of the integral homology group for a closed and connected smooth manifold whose rational homology group is isomorphic to that of a sphere. Note also that the dimension $7$ is the smallest dimension where we have discovered exotic spheres. \cite{milnor} is a pioneering work, followed by \cite{eellskuiper} and \cite{kervairemilnor} for example.
\end{Rem}

For other related studies on special generic maps, see \cite{burletderham, furuyaporto, saekisakuma, saekisakuma2, sakuma} for example.
\section{The $k$-dimensional complex projective space ${\mathbb{C}}P^k$.}
${\mathbb{C}P}^k$ denotes the $k$-dimensional complex projective space for each integer $k>0$.

Hereafter, we need notions of the {\it $j$-th Stiefel-Whitney class} and the {\it $j$-th Pontrjagin class} of a smooth manifold $M$. They are uniquely defined elements of $H^j(M;\mathbb{Z}/2\mathbb{Z})$ where $\mathbb{Z}/2\mathbb{Z}$ is a finite field of order $2$, and $H^{4j}(M;\mathbb{Z})$, respectively. More precisely, they are the {\it $j$-th Stiefel-Whitney class} and the {\it $j$-th Pontrjagin class} of the tangent bundle of $M$, respectively. For systematic and general expositions on such cohomology classes or characteristic classes for linear bundles, see \cite{milnorstasheff} for example.

A {\it spin} manifold is an orientable smooth manifold whose second Stiefel-Whitney class is zero. Such linear bundles are said to be {\it spin} in general.

\begin{Thm}
	\label{thm:3}
	${\mathbb{C}P}^k$ is a closed, connected and smooth manifold of dimension $2k$ and a complex manifold of dimension $k>0$ enjoying the following properties.
	\begin{enumerate}
		\item ${\mathbb{C}P}^k$ is simply-connected and in the case $k=1$ it is diffeomorphic to the $2$-dimensional unit sphere $S^2$.
		\item $H_j({\mathbb{C}P}^k;\mathbb{Z})$ is isomorphic to the trivial group for any odd integer $0 \leq j \leq 2k$ and isomorphic to $\mathbb{Z}$ for any even integer $0\leq j \leq 2k$.
		\item $H^{2i}({\mathbb{C}P}^k;\mathbb{Z})$ is generated by the cup product ${u_0}^i:={\cup}_{j=1}^i u_0$ for an arbitrary integer $1 \leq i \leq k$ and a generator $u_0 \in H^2({\mathbb{C}P}^k;\mathbb{Z})$.
		\item If $k$ is odd, then ${\mathbb{C}P}^k$ is spin.
		\item The $j$-th Pontrjagin class of ${\mathbb{C}P}^k$ is not the zero element for each integer $1 \leq j \leq \frac{k}{2}$

	\end{enumerate}
\end{Thm}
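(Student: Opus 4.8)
The plan is to deduce every item from two standard structural facts about $\mathbb{C}P^k$: the CW decomposition with exactly one cell in each even dimension $0,2,\dots ,2k$ and no odd-dimensional cells, and the Hopf fibration $S^1\to S^{2k+1}\to\mathbb{C}P^k$. For property~(1), the absence of $1$-cells gives $\pi_1(\mathbb{C}P^k)=0$ (equivalently, the homotopy exact sequence of the Hopf fibration together with $\pi_1(S^{2k+1})=\pi_0(S^1)=0$ for $k\geq 1$), and $\mathbb{C}P^1\cong S^2$ is classical. For property~(2), the cellular chain complex of the decomposition above has all boundary maps zero for dimensional reasons, so $H_j(\mathbb{C}P^k;\mathbb{Z})\cong\mathbb{Z}$ for even $j$ with $0\leq j\leq 2k$ and is trivial otherwise. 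For property~(3), I would feed $H^{\ast}(S^{2k+1};\mathbb{Z})$ (concentrated in degrees $0$ and $2k+1$) into the Gysin sequence of the Hopf fibration: letting $u_0\in H^2(\mathbb{C}P^k;\mathbb{Z})$ be the Euler class of that circle bundle, the homomorphism $\smile u_0\colon H^i(\mathbb{C}P^k;\mathbb{Z})\to H^{i+2}(\mathbb{C}P^k;\mathbb{Z})$ is an isomorphism for $0\leq i\leq 2k-2$, so starting from $H^0=\mathbb{Z}\langle 1\rangle$ one gets inductively that $u_0^{\,i}$ generates $H^{2i}(\mathbb{C}P^k;\mathbb{Z})$ for every $1\leq i\leq k$. (Alternatively one can induct on $k$ through the cofibre sequence $\mathbb{C}P^{k-1}\hookrightarrow\mathbb{C}P^k\to S^{2k}$.) References for all of this are \cite{hatcher,milnorstasheff}.

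For properties~(4) and~(5) I would invoke the standard isomorphism of real bundles $T\mathbb{C}P^k\oplus\underline{\mathbb{C}}\cong\overline{\gamma}^{\oplus(k+1)}$, where $\gamma$ is the tautological complex line bundle and $\overline{\gamma}=\gamma^{\ast}$, noting that $c_1(\overline{\gamma})$ equals $\pm u_0$ and that we will only need $u_0$ modulo $2$ and $u_0^2$, so the sign is immaterial. Since adding a trivial bundle changes neither the Stiefel--Whitney nor the Pontrjagin classes, the total Stiefel--Whitney class is $w(\mathbb{C}P^k)=(1+a)^{k+1}$, where $a\in H^2(\mathbb{C}P^k;\mathbb{Z}/2\mathbb{Z})$ is the reduction of $u_0$. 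Hence $w_1=0$ (so $\mathbb{C}P^k$ is orientable, as is anyway clear from its complex structure) and $w_2=(k+1)a$. If $k$ is odd then $k+1$ is even, so $w_2=0$ and $\mathbb{C}P^k$ is spin, which is property~(4).

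For property~(5), I fix the orientation of $\mathbb{C}P^k$ induced by the complex structure and use that $H^{\ast}(\mathbb{C}P^k;\mathbb{Z})$ is torsion-free, so that the Pontrjagin class of a Whitney sum is genuinely the product of the Pontrjagin classes (no $2$-torsion ambiguity). Then $p(\mathbb{C}P^k)=p(\overline{\gamma})^{k+1}=(1+u_0^2)^{k+1}$, using $p_1(\overline{\gamma})=c_1(\overline{\gamma})^2=u_0^2$ for a complex line bundle regarded as an oriented rank-$2$ real bundle. Taking the component of degree $4j$ gives $p_j(\mathbb{C}P^k)=\binom{k+1}{j}u_0^{2j}$. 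For $1\leq j\leq \tfrac{k}{2}$ we have $2j\leq k$, so $u_0^{2j}$ is a generator of $H^{4j}(\mathbb{C}P^k;\mathbb{Z})\cong\mathbb{Z}$ by property~(3), and $\binom{k+1}{j}$ is a nonzero integer; since the group is infinite cyclic, $p_j(\mathbb{C}P^k)\neq 0$, which is property~(5).

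No step here is genuinely hard; the part that most deserves care is the bookkeeping linking (3)--(5): making sure the same class $u_0$ serves simultaneously as the Euler class of the Hopf fibration, as (up to sign) the first Chern class of the tautological bundle, and as the ring generator, so that the multiplicative formulas $(1+a)^{k+1}$ and $(1+u_0^2)^{k+1}$ are read in terms of this single generator, and checking the arithmetic $w_2=(k+1)a$ and $p_j=\binom{k+1}{j}u_0^{2j}$ against the low-dimensional cases $\mathbb{C}P^1\cong S^2$, $\mathbb{C}P^2$, $\mathbb{C}P^3$. Everything else is the standard cohomology and characteristic-class theory of complex projective spaces, for which I would cite \cite{hatcher} and \cite{milnorstasheff}.
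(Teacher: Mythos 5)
Your proposal is correct and is exactly the standard argument from \cite{hatcher} and \cite{milnorstasheff} that the paper implicitly relies on: the paper states Theorem \ref{thm:3} without proof, as a collection of classical facts about ${\mathbb{C}P}^k$. Your derivation (cellular chain complex for (1)--(2), the Gysin sequence of the Hopf fibration for (3), and $T{\mathbb{C}P}^k\oplus\underline{\mathbb{C}}\cong\overline{\gamma}^{\oplus(k+1)}$ giving $w=(1+a)^{k+1}$ and $p=(1+{u_0}^2)^{k+1}$ for (4)--(5)) is complete and correctly handles the only delicate points, namely the sign ambiguity in $u_0$ and the absence of $2$-torsion needed for the Pontrjagin product formula.
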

\section{Proofs of our Main Theorems.}
In proving our Main Theorem \ref{mthm:1}, the following theorem is a main ingredient.
\begin{Thm}
	\label{thm:4}
	${{\mathbb{C}P}^3}$ does not admit special generic maps into ${\mathbb{R}}^5$.
\end{Thm}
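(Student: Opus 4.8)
The plan is to argue by contradiction: suppose there is a special generic map $f \colon {\mathbb{C}P}^3 \to {\mathbb{R}}^5$, so that we are in the situation of Propositions~\ref{prop:2} and \ref{prop:3} with $m=6$, $n=5$ and $m-n=1$. First I would extract the geometric data. Proposition~\ref{prop:2} gives a compact $5$-dimensional manifold $W_f$ with a smooth immersion $\bar{f} \colon W_f \to {\mathbb{R}}^5$; since $\dim W_f$ equals the dimension of the target, $\bar{f}$ is a local diffeomorphism and the tangent bundle $TW_f$ is trivial. Since ${\mathbb{C}P}^3$ is canonically oriented and simply-connected and $m-n=1 \le 3$, Proposition~\ref{prop:3} provides a compact, oriented, smooth $7$-dimensional manifold $W$ with $\partial W = {\mathbb{C}P}^3$ collapsing onto $W_f$; because $W$ is obtained from ${\mathbb{C}P}^3 \times [-1,0]$ by attaching handles of index at least $3$ (see the proof of Theorem~\ref{thm:2}), $W$, and hence $W_f$, is simply-connected. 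In the case $m-n=1$ the fibre in Proposition~\ref{prop:2}~(\ref{prop:2.4.2}) is $S^1$, and a smooth $S^1$-bundle over the simply-connected base $W_f$ is the unit sphere bundle of an oriented real vector bundle $\zeta$ of rank $m-n+1=2$; combining this with the disk-bundle part of Proposition~\ref{prop:2}~(\ref{prop:2.4.1}), one may take $W$ to be the total space of the disk bundle $D(\zeta)$ and $q_f \colon {\mathbb{C}P}^3 = \partial D(\zeta) \to W_f$ the restriction of the bundle projection.

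Next I would compute the first Pontrjagin class. Since the boundary of $W$ has trivial normal bundle in $W$,
\[
 T{\mathbb{C}P}^3 \oplus {\epsilon}^1 \;\cong\; TW|_{{\mathbb{C}P}^3} \;\cong\; {q_f}^{\ast}(TW_f \oplus \zeta) \;\cong\; {\epsilon}^5 \oplus {q_f}^{\ast}\zeta ,
\]
where ${\epsilon}^k$ denotes a trivial bundle of rank $k$ and I used the triviality of $TW_f$. Taking Pontrjagin classes and using that $\zeta$ is an oriented rank-$2$ bundle, so that $p_1(\zeta) = e(\zeta)^2$ for the Euler class $e(\zeta) \in H^2(W_f;\mathbb{Z})$, one gets $p_1({\mathbb{C}P}^3) = {q_f}^{\ast}p_1(\zeta) = ({q_f}^{\ast}e(\zeta))^2$. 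By Theorem~\ref{thm:3}, $H^2({\mathbb{C}P}^3;\mathbb{Z})$ is infinite cyclic generated by $u_0$, so ${q_f}^{\ast}e(\zeta) = t\,u_0$ for some $t \in \mathbb{Z}$, whence $p_1({\mathbb{C}P}^3) = t^2\,{u_0}^2$; since the first Pontrjagin class of ${\mathbb{C}P}^3$ is non-zero (Theorem~\ref{thm:3}), $t \neq 0$.

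Finally I would cube. Since $W_f$ is $5$-dimensional, $H^6(W_f;\mathbb{Z}) = 0$, so $e(\zeta)^3 = 0$, and therefore
\[
 0 \;=\; {q_f}^{\ast}\!\left(e(\zeta)^3\right) \;=\; \left({q_f}^{\ast}e(\zeta)\right)^{3} \;=\; t^3\,{u_0}^3
\]
in $H^6({\mathbb{C}P}^3;\mathbb{Z})$, which by Theorem~\ref{thm:3} is infinite cyclic generated by ${u_0}^3$. Hence $t^3 = 0$, contradicting $t \neq 0$, and this completes the proof.

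I expect the only genuinely delicate step to be the reduction, in the first paragraph, to the clean model $W = D(\zeta)$ with $\zeta$ an honest oriented rank-$2$ vector bundle over $W_f$: this requires linearising the $S^1$-bundle of Proposition~\ref{prop:2}~(\ref{prop:2.4.2}), gluing it compatibly to the disk bundle of Proposition~\ref{prop:2}~(\ref{prop:2.4.1}), and invoking simple-connectivity of $W_f$ to secure an orientation of $\zeta$. Everything after that is forced: the tangent bundle identity, the resulting fact that $p_1({\mathbb{C}P}^3)$ is a \emph{square} of a class in the image of ${q_f}^{\ast}$, and the cubing that collides with the vanishing of $H^6(W_f;\mathbb{Z})$.
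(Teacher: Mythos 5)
Your argument is correct, and it is genuinely different from all three methods in the paper. The paper's proofs of Theorem~\ref{thm:4} (via Theorems~\ref{thm:5}, \ref{thm:6}--\ref{thm:7}, and \ref{thm:8}) are intersection-theoretic: they represent degree-$2$ classes by embedded spheres, build variants of sections over $W_f$, analyse generic self-intersections of the preimages ${q_f}^{-1}(Y)$ modulo $2$ (using spin conditions), or count connected components of $S(f)$. You instead exploit the codimension-one structure head-on: $W_f$ is parallelizable because $\bar{f}$ is a codimension-$0$ immersion, the $S^1$-bundle of Proposition~\ref{prop:2}~(\ref{prop:2.4.2}) linearizes (since ${\rm Diff}(S^1) \simeq O(2)$) and orients over the simply-connected $W_f$, so $T{\mathbb{C}P}^3$ is stably ${q_f}^{\ast}\zeta$ for an oriented rank-$2$ bundle $\zeta$; hence $p_1({\mathbb{C}P}^3)=({q_f}^{\ast}e(\zeta))^2$ is a square of a class whose cube is pulled back from $H^6(W_f;\mathbb{Z})=0$, which collides with $p_1({\mathbb{C}P}^3)=4{u_0}^2\neq 0$ and ${u_0}^3\neq 0$. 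This is shorter and purely characteristic-class-theoretic, and it yields a clean general statement (a closed simply-connected $(n+1)$-manifold admitting a special generic map into ${\mathbb{R}}^n$ has $p_1=a^2$ with $a\in H^2$ and $a^j=0$ for $2j>n$); the trade-off is that it is tied to $m-n=1$ via the identity $p_1(\zeta)=e(\zeta)^2$, whereas the paper's Theorem~\ref{thm:5} covers all $m\geq 6$ into ${\mathbb{R}}^5$ and Theorem~\ref{thm:8} feeds the Bott-manifold examples of Main Theorem~\ref{mthm:2}. Two small points of hygiene: the Whitney formula for $p_1$ holds here on the nose because $H^4({\mathbb{C}P}^3;\mathbb{Z})$ is torsion-free; and the reduction to $W=D(\zeta)$, which you rightly flag as the delicate step, can be bypassed entirely by verifying the stable isomorphism $T{\mathbb{C}P}^3\oplus{\epsilon}^1\cong{\epsilon}^5\oplus{q_f}^{\ast}\zeta$ piecewise on the two parts of $M$ given by Proposition~\ref{prop:2}~(\ref{prop:2.4}), since only the induced map on $H^{\ast}$ is used afterwards.
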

 \begin{Cor}[Main Theorem \ref{mthm:1}]
	\label{cor:2}
	${{\mathbb{C}P}^3}$ does not admit special generic maps into any Euclidean space. 
\end{Cor}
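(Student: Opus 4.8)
The plan is to derive Corollary~\ref{cor:2} from Theorem~\ref{thm:4} by a short case analysis on the dimension $n$ of the Euclidean target. Since ${\mathbb{C}P}^3$ is a closed manifold of dimension $6$, Definition~\ref{def:1} forces $1 \le n \le 6$ for a special generic map ${\mathbb{C}P}^3 \rightarrow {\mathbb{R}}^n$ to exist at all, so it suffices to rule out each of the six values $n = 1,2,3,4,5,6$. I would treat these in three groups: the range $n \le 4$, the critical value $n = 5$, and the equidimensional case $n = 6$.

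For $n \in \{1,2,3,4\}$ the manifold ${\mathbb{C}P}^3$ is exactly the one appearing in Corollary~\ref{cor:1}~(\ref{cor:1.2}): take $m = 6$, which is even, so that $\tfrac{m}{2} = 3$ and the hypothesis $n < m - 1 = 5$ holds. With $N = {\mathbb{R}}^n$, a connected $n$-dimensional manifold which is not compact and has no boundary, Corollary~\ref{cor:1}~(\ref{cor:1.2}) immediately gives that ${\mathbb{C}P}^3$ admits no special generic map into ${\mathbb{R}}^n$. For $n = 5$ the statement is precisely Theorem~\ref{thm:4}, which I take as given; this is the single value of $n$ not reached by Corollary~\ref{cor:1}, and the real content of the Main Theorem lies in its proof.

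It remains to treat the equidimensional case $n = 6 = \dim {\mathbb{C}P}^3$, where I would invoke the theory of Eliashberg \cite{eliashberg} and Kikuchi--Saeki \cite{kikuchisaeki} on special generic (definite fold) maps into a Euclidean space of the same dimension. If $f:{\mathbb{C}P}^3 \rightarrow {\mathbb{R}}^6$ were special generic, then its differential is fibrewise surjective away from the fold locus $S(f)$ and drops rank by exactly one along $S(f)$; a standard correction along $S(f)$ then upgrades the differential to a stable trivialization of $T{\mathbb{C}P}^3$, so that ${\mathbb{C}P}^3$ would be stably parallelizable. (Alternatively, by Proposition~\ref{prop:2} the manifold $W_f$ here is a compact $6$-manifold immersed in ${\mathbb{R}}^6$, hence parallelizable, and ${\mathbb{C}P}^3$ is obtained by gluing two copies of $W_f$.) But all Pontrjagin classes of a stably parallelizable manifold vanish, which contradicts Theorem~\ref{thm:3}~(5): the first Pontrjagin class of ${\mathbb{C}P}^3$ is nonzero.

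Among these steps the range $n \le 4$ is immediate from Corollary~\ref{cor:1}, and the case $n = 6$ is routine once the equidimensional theory is quoted together with the Pontrjagin computation recorded in Theorem~\ref{thm:3}. The genuine obstacle --- the new mathematics --- is Theorem~\ref{thm:4} itself, the borderline value $n = 5$ missed by Corollary~\ref{cor:1}, whose proof (by several methods) occupies the main part of this section.
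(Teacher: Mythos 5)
Your proposal is correct and follows essentially the same route as the paper: Corollary~\ref{cor:1}~(\ref{cor:1.2}) (or Theorem~\ref{thm:2}) for $n\le 4$, Theorem~\ref{thm:4} for $n=5$, and Eliashberg's equidimensional criterion (stable parallelizability, contradicted by the nonvanishing first Pontrjagin class from Theorem~\ref{thm:3}) for $n=6$. The only difference is that you spell out the Pontrjagin-class obstruction explicitly where the paper leaves it implicit.
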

\begin{proof}
	Theorem \ref{thm:2} or Corollary \ref{cor:1} (\ref{cor:1.2}) yields the fact that the ${{\mathbb{C}P}^3}$ does not admit special generic maps into ${\mathbb{R}}^n$ for $n=1,2,3,4$. This does not admit ones into ${\mathbb{R}}^6$ by \cite{eliashberg}. This says that a closed and orientable manifold admits a special generic map into the Euclidean space of the same dimension if and only if the so-called {\it Whitney sum} of the tangent bundle and a trivial linear bundle whose fiber is $\mathbb{R}$ is trivial. Theorem \ref{thm:4} completes the proof. 
\end{proof}

Before we prove Theorem \ref{thm:4}, we shortly refer to several notions.

The {\it fundamental class} of a compact, connected and oriented smooth, PL or topological manifold $Y$ is the canonically defined ($\dim Y$)-th homology class. This is defined as the generator of the group $H_{\dim Y}(Y,\partial Y;A)$, isomorphic to $A$, and compatible with the orientation where $A$ is a commutative ring having the identity element different from the zero element. 
Let $i_{Y,X}:Y \rightarrow X$ be a smooth, piecewise smooth, or topologically flat embedding satisfying $i_{Y,X}(\partial Y) \subset \partial X$ and $i_{Y,X}({\rm Int}\ Y) \subset {\rm Int}\ X$. In other words, $Y$ is embedded  {\it properly} in $X$.
Let $h$ be a homology class in $H_{\ast}(X,\partial X;A)$. If the value of the homomorphism ${i_{Y,X}}_{\ast}$ induced by the smooth, piecewise smooth, or topologically flat embedding $i_{Y,X}:Y \rightarrow X$ at the fundamental class of $Y$ is $h$, then $h$ is said to be {\it represented} by the orientable (or oriented) submanifold $Y$. 

We do not need orientations in the case where $A$ is $\mathbb{Z}/2\mathbb{Z}$, the field of order $2$, or more generally, the commutative ring consisting of elements whose orders are at most $2$.

The {\it Poincar\'e dual} ${\rm PD}(h_{\rm c}) \in H^{\dim Y-j}(Y,\partial Y;A)$ to a homology class $h_{\rm c} \in H_{j}(Y,\partial Y;A)$, the {\it Poincar\'e dual} ${\rm PD}(c_{\rm c}) \in H_{\dim Y-j}(Y,\partial Y;A)$ to a cohomology class $c_{\rm c} \in H^{j}(Y,\partial Y;A)$ and Poincar\'e duality theorem for the manifold $Y$ are also fundamental and important.

The {\it cohomology dual} to a homology class of a basis of a uniquely defined maximal free subgroup of a homology group of $(X,X^{\prime} \subset X)$ {\it respecting the basis} is also important for topological spaces $X \supset X^{\prime}$ where $X^{\prime}$ may be empty. It is a uniquely defined cohomology class in the cohomology group of the pair $(X,X^{\prime})$ whose degree is same as that of the homology class.

For notions and facts here, consult \cite{hatcher} again for example.

We show Theorem \ref{thm:4} by several methods.
\subsection{The first method.}
\begin{Thm}
	\label{thm:5}
	Let $m>5$ be an integer. An $m$-dimensional closed and simply-connected manifold whose 2nd integral cohomology group is isomorphic to $\mathbb{Z}$ and which has a generator $u \in H^2(M;\mathbb{Z})$ whose square $u \cup u$ is not divisible by $2$ does not admit special generic maps into ${\mathbb{R}}^5$.

\end{Thm}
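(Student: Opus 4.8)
The plan is to suppose $M$ admits a special generic map $f\colon M\to{\mathbb R}^5$ and to deduce that $u\cup u$ is divisible by $2$, contrary to hypothesis. By Propositions \ref{prop:2} and \ref{prop:3} this produces a compact smooth $5$-manifold $W_f$ with a smooth immersion into ${\mathbb R}^5$, a smooth surjection $q_f\colon M\to W_f$, and a compact $(m+1)$-manifold $W$ with $\partial W=M$ that collapses onto $W_f$, where $W$ and $W_f$ are simply-connected (the fibres of $q_f$ are connected), $W_f$ has the homotopy type of a $4$-dimensional polyhedron (so $H^j(W_f;{\mathbb Z})=H^j(W;{\mathbb Z})=0$ for $j\ge5$), $r_f\colon W\to W_f$ is a homotopy equivalence, and $q_f=r_f\circ i_M$. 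The decisive extra input is that a codimension-$0$ immersion of the $5$-manifold $W_f$ into ${\mathbb R}^5$ trivialises $TW_f$; hence $W_f$ is parallelizable and all its Wu classes $v_j(W_f)$ with $j\ge1$ vanish.

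First suppose $H^2(W_f;{\mathbb Z})\ne0$; this holds automatically when $m\ge7$, since then the Lefschetz duals $H^2(W,M;{\mathbb Z})\cong H_{m-1}(W;{\mathbb Z})$ and $H^3(W,M;{\mathbb Z})\cong H_{m-2}(W;{\mathbb Z})$ vanish ($W\simeq$ a $4$-complex and $m-2\ge5$), so $i_M^{\ast}$, hence $q_f^{\ast}$, is an isomorphism on second integral cohomology. In general $H^2(W_f;{\mathbb Z})\cong H^2(W;{\mathbb Z})$ is free (as $W$ is simply-connected) and injects into $H^2(M;{\mathbb Z})\cong{\mathbb Z}$ (because $H^2(W,M;{\mathbb Z})\cong H_{m-1}(W;{\mathbb Z})=0$), so it is infinite cyclic with a generator $\beta$, and $q_f^{\ast}\beta=d\,u$ for an integer $d$ which is odd: the kernel of $i_M^{\ast}$ modulo $2$ is the image of $H^2(W,M;{\mathbb Z}/2{\mathbb Z})\cong H_{m-1}(W;{\mathbb Z}/2{\mathbb Z})=0$, so $\bar\beta\ne0$ would force $\overline{d\,u}=q_f^{\ast}\bar\beta\ne0$, i.e.\ $d$ is odd. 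I would then prove $\beta\cup\beta\in 2\,H^4(W_f;{\mathbb Z})$ by a Wu-class computation on the manifold-with-boundary $W_f$: for $z\in H^1(W_f,\partial W_f;{\mathbb Z}/2{\mathbb Z})$ the Cartan formula together with $\mathrm{Sq}^1\bar\beta=0$ ($\bar\beta$ is the mod-$2$ reduction of an integral class) and $\mathrm{Sq}^2 z=0$ gives $\mathrm{Sq}^2(\bar\beta\cup z)=\mathrm{Sq}^2\bar\beta\cup z$; pairing with the mod-$2$ fundamental class of $(W_f,\partial W_f)$ and using the Wu relation with $v_2(W_f)=0$ yields $\langle\mathrm{Sq}^2\bar\beta\cup z,[W_f,\partial W_f]\rangle=0$, and since the Lefschetz pairing $H^4(W_f;{\mathbb Z}/2{\mathbb Z})\times H^1(W_f,\partial W_f;{\mathbb Z}/2{\mathbb Z})\to{\mathbb Z}/2{\mathbb Z}$ is perfect, $\bar\beta\cup\bar\beta=\mathrm{Sq}^2\bar\beta=0$; as $H^5(W_f;{\mathbb Z})=0$ this gives $\beta\cup\beta\in 2\,H^4(W_f;{\mathbb Z})$. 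Then $d^2\,(u\cup u)=q_f^{\ast}(\beta\cup\beta)$ is divisible by $2$, and $d$ being odd this forces $u\cup u$ divisible by $2$: contradiction.

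The complementary possibility is $m=6$ with $H^2(W_f;{\mathbb Z})=0$. Here I would use the bundle picture of Proposition \ref{prop:2}: $M=X_1\cup X_2$ with $X_1=q_f^{-1}(W_f\setminus\mathrm{Int}\,N(\partial W_f))$ the total space of an $S^1$-bundle over a copy of $W_f$ and $X_2=q_f^{-1}(N(\partial W_f))$ the total space of a linear $D^2$-bundle over $\partial W_f$, glued along their common $S^1$-bundle over $\partial W_f$. Since the base of $X_1$ is simply-connected with vanishing second cohomology, this $S^1$-bundle, hence its restriction over $\partial W_f$, is trivial; that restriction is the sphere bundle of $X_2\to\partial W_f$, which thus has a section, so the rank-$2$ bundle $\xi$ underlying $X_2$, which is the normal bundle of the singular set $S(f)\cong\partial W_f$ in $M$, splits off a trivial line bundle and has $w_2(\xi)=0$. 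A Gysin-sequence computation gives $u|_{X_1}=0$, so $u$ lifts to $H^2(M,X_1;{\mathbb Z}/2{\mathbb Z})$; via the Thom isomorphisms for $\xi$ (with the excision $H^{\ast}(M,X_1;{\mathbb Z}/2{\mathbb Z})\cong H^{\ast}(X_2,\partial X_2;{\mathbb Z}/2{\mathbb Z})$) and the relation $\tau\cup\tau=\tau\cup\pi^{\ast}w_2(\xi)=0$ for the Thom class $\tau$, the relative square of this lift vanishes, so $\bar u\cup\bar u=0$ in $H^4(M;{\mathbb Z}/2{\mathbb Z})$, i.e.\ $u\cup u$ is divisible by $2$: again a contradiction.

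The step I expect to be the main obstacle is the low-dimensional bookkeeping for $m=6$: in the complementary case one must identify the normal bundle of $S(f)$ correctly and run the relative cup-product computation even when $\partial W_f$ is disconnected, while the first case has to be arranged so that $u$ (or at least an odd multiple of it) genuinely comes from $W_f$. Everything else, namely the Wu-class identity, the Gysin sequences, and the Thom isomorphism bookkeeping, is routine once the structure supplied by Propositions \ref{prop:2}--\ref{prop:3} is in place.
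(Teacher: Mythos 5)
Your proposal is correct, and it reaches the conclusion by a genuinely different route from the paper. The paper argues geometrically throughout: it splits into cases according to the rank of $H_2(W_f;\mathbb{Z})$ being $\geq 2$, $1$ or $0$, represents second homology classes by embedded $2$-spheres and ``variants of sections'' $s_i:S^2\rightarrow M$, takes $3$-dimensional submanifolds $Y\subset W_f$ Poincar\'e dual to the relevant classes, and extracts the divisibility of the cup square from an analysis of generic self-intersections of ${q_f}^{-1}(Y)$ using the spin condition on $Y$ and $W_f$; in the rank-$0$ case with $m\geq 7$ it instead shows the generating sphere is null-homotopic. You replace all of this by homological algebra: Lefschetz duality for $W$ kills $H^2(W,M;\mathbb{Z})$ and (for $m\geq 7$) $H^3(W,M;\mathbb{Z})$, so $H^2(W_f;\mathbb{Z})$ is forced to be $0$ or $\mathbb{Z}$ and your two cases exactly cover the paper's three; in the case $H^2(W_f;\mathbb{Z})\cong\mathbb{Z}$ the vanishing of the Wu class $v_2$ of the parallelizable compact manifold $W_f$ (parallelizable because of the codimension-$0$ immersion into ${\mathbb{R}}^5$) gives $\mathrm{Sq}^2\bar\beta=\bar\beta\cup\bar\beta=0$ via the Cartan formula and the perfect Lefschetz pairing, and the mod-$2$ bookkeeping with the odd integer $d$ transfers this to $u\cup u$; in the remaining case $m=6$, $H^2(W_f;\mathbb{Z})=0$, your Thom-class computation on the trivial $D^2$-bundle over $S(f)$ is essentially the same triviality-of-bundles argument the paper uses at the very end, packaged as $\tau\cup\tau=\pi^{\ast}e(\xi)\cup\tau=0$. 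What your route buys is brevity and transparency: the role of the immersion $\bar{f}$ is isolated in the single statement $v_2(W_f)=0$, and the delicate generic self-intersection and spin-submanifold arguments (which are the least detailed part of the paper's proof) are avoided entirely. What the paper's route buys is that its geometric machinery is reused verbatim in Theorems 6--8. Two points deserve an explicit word in a written version: the Wu relation $\langle \mathrm{Sq}^k x,[W_f,\partial W_f]\rangle=\langle v_k\cup x,[W_f,\partial W_f]\rangle$ together with $\mathrm{Sq}(v)=w(TW_f)$ for a \emph{compact manifold with boundary} is standard but not in \cite{milnorstasheff} as stated there (it can also be obtained from Atiyah duality and the stable triviality of the normal bundle), and for $m\geq 9$ the manifold $W$ of Proposition \ref{prop:3} is only PL or topological, so one should note that Lefschetz duality is being used in that category.
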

We need some additional notions from characteristic classes.
{\it Spin} bundles are real vector bundles or linear bundles which are {\it orientable} and whose 2nd Stiefel-Whitney classes are zero where {\it orientable} linear bundles mean bundles whose structure groups are reduced to rotation groups, or groups consisting of linear transformations preserving the orientations of the fibers.

We also need fundamental or advanced arguments on singularity theory of differentiable maps and theory of PL or smooth manifolds. For example we encounter {\it generic} immersions, embeddings and maps. We omit precise expositions in the present paper. See \cite{golubitskyguillemin} for related theory for example.

\begin{proof}[A proof of Theorem \ref{thm:5}]
	For the proof of the present theorem, only the assumption $m=6$ is essentially new. This reviews several main theorems and their proofs of \cite{kitazawa3}. Our exposition here may be a bit different from ones there.
	
	Suppose that a special generic map $f:M \rightarrow {\mathbb{R}}^5$ exists. We investigate the topology of $W_f$ in Proposition \ref{prop:2}. This is a $5$-dimensional compact and connected manifold smoothly immersed into ${\mathbb{R}}^5$ via $\bar{f}:W_f \rightarrow {\mathbb{R}}^5$.
	Proposition \ref{prop:3} and the proof of Theorem \ref{thm:2} yield the triviality of ${\pi}_1(W_f)$. 
	More precisely, $W$ in the proof of Theorem \ref{thm:2} is obtained by attaching handles whose indices are greater than $m-5+1=m-4 \geq 2$ to $M \times \{0\} \subset M \times [-1,0]$. Here the boundary $M$ is identified with $M \times \{-1\}$ as the proof of Theorem \ref{thm:2}.
	See also related arguments in \cite{saekisuzuoka} or Corollary 4.8 there. See also articles \cite{kitazawa0.1,kitazawa0.2,kitazawa0.3} by the author.
	We may regard that homology classes in our proof are represented by compact and oriented smooth submanifolds thanks to the celebrating theory of \cite{thom}. \\ 
	
	Suppose that the rank of $H_2(W_f;\mathbb{Z})$ is greater than $1$. We have two 2nd integral homology classes $e_1,e_2 \in H_2(W_f;\mathbb{Z})$ satisfying the following conditions.
	\begin{itemize}
		\item $e_i$ is not divisible by any integer greater than $1$ for $i=1,2$.
		\item $e_1$ and $e_2$ are mutually independent and of infinite order.
	\end{itemize} 
	$W_f$ is simply-connected. These classes are represented by oriented smooth submanifolds diffeomorphic to $S^2$ embedded in ${\rm Int}\ W_f$. 
	Let $e_i:S^2 \rightarrow W_f$ also denote the smooth embedding giving the oriented submanifold for the homology class $e_i$.
	If we restrict the map $q_f:M \rightarrow W_f$ to the preimages of the submanifolds, then we have the projections of linear bundles whose fibers are circles. They do not admit sections in general. We consider a smooth homotopy $E_i:S^2 \times [0,1] \rightarrow W_f$ from the original embedding $e_i$ of the $2$-dimensional standard sphere into ${\rm Int}\ W_f \subset W_f$ to another smooth embedding enjoying the following properties. We have this due to the structure of the special generic map $f$ and the smooth surjection $q_f:M \rightarrow W_f$.
	\begin{itemize}
		\item There exists a finite subset $J_i \subset (0,1)$.
		\item The restriction of $E_i$ to each subset of the form $S^2 \times \{p\} \subset S^2 \times [0,1]$ is a smooth embedding the intersection of the boundary $\partial W_f$ and whose image is a finite set.
		\item The restriction of $E_i$ to each connected component of $S^2 \times ([0,1]-J_i)$ is regarded as a smooth isotopy.
		\item There exists a smooth embedding $s_i:S^2 \rightarrow M$ satisfying $q_f \circ s_i={e_i}^{\prime}$ where ${e_i}^{\prime}:S^2 \rightarrow W_f$ is defined by ${e_i}^{\prime}(x):=E_i(x,1)$.  
	\end{itemize} 
	$s_i$ is regarded as a variant of the section for the original bundle. 
	We can see that two mutually independent 2nd integral homology classes are represented by the oriented smooth submanifolds given by the smooth embeddings $s_1$ and $s_2$. They are of infinite order. This contradicts the assumption on the rank of $H^2(M;\mathbb{Z})$. 

	Suppose that the rank of $H_2(W_f;\mathbb{Z})$ is $1$. We have a similar variant $s:S^2 \rightarrow M$ of a section of the bundle over a suitable smoothly embedded $2$-dimensional standard sphere in ${\rm Int}\ W_f$. We have a 2nd integral homology class $e \in H_2(W_f;\mathbb{Z})$ satisfying the following conditions and represented by the sphere in ${\rm Int}\ W_f \subset W_f$. 
	\begin{itemize}
		\item $e$ is not divisible by any integer greater than $1$.
		\item $e$ is of infinite order.
	\end{itemize} 
	We can define the cohomology dual $e^{\ast} \in H^2(W_f;\mathbb{Z})$ to $e$ respecting the basis. 
	By Poincar\'e duality theorem, we can take a $3$-dimensional, compact, and connected oriented submanifold $Y$ smoothly embedded in $W_f$ suitably in such a way that the Poincar\'e dual to $e^{\ast}$ is represented by this. We can take $Y$ enjoying the following properties.
	\begin{itemize}
		\item The boundary $\partial Y$ is embedded in the boundary $\partial W_f$ and the interior ${\rm Int}\ Y$ is embedded in the interior ${\rm Int}\ W_f$. In other words, $Y$ is embedded properly in $W_f$.
		\item ${q_f}^{-1}(Y)$ is regarded as an ($m-2$)-dimensional closed and connected manifold and regarded as one of the domain of a special generic map whose image is $Y$ and the manifold of whose target is a $3$-dimensional, non-compact, connected and orientable manifold with no boundary. 
	\end{itemize}
	$W_f$ is simply-connected and this with the structure of the special generic map yields the fact that ${q_f}^{-1}(Y)$ is orientable. We can also see this from the fact that $M$, $W_f$ and $Y$ are orientable for example.
	By arguments on intersection theory and Poincar\'e duality, there exists a suitable integral homology class $e_s \in H_2(M;\mathbb{Z})$ of infinite order represented by an oriented submanifold given by $s:S^2 \rightarrow M$ and the class is not divisible by any integer greater than $1$. The Poincar\'e dual to its cohomology dual respecting the basis, which is defined as an element of $H^2(M;\mathbb{Z})$ uniquely, is represented by a suitably oriented submanifold ${q_f}^{-1}(Y)$. We consider so-called {\it generic} {\it self-intersections} of $Y$ and ${q_f}^{-1}(Y)$. $Y$ is a spin manifold since it is $3$-dimensional, compact and orientable. $W_f$ is smoothly immersed into ${\mathbb{R}}^5$ and thus spin. By arguments on spin bundles and manifolds, a generic self-intersection of $Y$ can be regarded as the union of the empty sets or $1$-dimensional smoothly and properly embedded submanifolds chosen for all homology classes of $H_1(Y,\partial Y;\mathbb{Z}/2\mathbb{Z})$. This satisfies the following conditions.
	\begin{itemize}
		\item For each homology class of $H_1(Y,\partial Y;\mathbb{Z}/2\mathbb{Z})$, it is the empty set or represented by each connected component of the chosen submanifold. 
		\item For each homology class of $H_1(Y,\partial Y;\mathbb{Z}/2\mathbb{Z})$, the chosen submanifold is either a disjoint union consisting of only $1$-dimensional compact and connected manifolds diffeomorphic to a closed interval or one consisting of only circles, unless it is empty.
		\item For each homology class of $H_1(Y,\partial Y;\mathbb{Z}/2\mathbb{Z})$, the chosen submanifold is a disjoint union of $1$-dimensional compact and connected manifolds of an even number, unless it is empty.
	\end{itemize}
	Note that if the coefficient ring for the homology groups is $\mathbb{Z}/2\mathbb{Z}$, then we do not need orientations for compact and connected submanifolds.
	The structure of special generic maps yields the following arguments where the coefficient ring for the homology groups is $\mathbb{Z}/2\mathbb{Z}$. We can argue as follows.
	\begin{itemize}
		\item A generic self-intersection of ${q_f}^{-1}(Y)$ is the union of manifolds diffeomorphic to $S^1 \times S^{m-5}$ of an even number and ($m-4$)-dimensional homotopy spheres of an even number, unless it is empty. $W_f$ is simply-connected and the preimage of each circle in ${\rm Int}\ W_f \subset W_f$ is the total space of a trivial smooth bundle. The ($m-4$)-dimensional homotopy spheres are the preiamges of the connected components diffeomorphic to $1$-dimensional compact and connected manifolds in $W_f$ which are smoothly and properly embedded: the interiors are embedded in ${\rm Int}\ W_f$ and the boundaries are in $\partial W_f$.
		\item For each of the ($m-4$)-dimensional closed and connected manifolds before, consider the value of the homomorphism induced by the inclusion into $M$ at the fundamental class of the compact and connected suitably oriented manifold of the domain. Take the sum for all of them. Then we have the zero element $0 \in H_{m-4}(M;\mathbb{Z}/2\mathbb{Z})$. 
	\end{itemize}
	
	For arguments here, see also \cite{kitazawa3} for example.
	We can take the cohomology dual ${e_s}^{\ast}$ to $e_s$ respecting the basis.  
	This yields that the cup product ${e_s}^{\ast} \cup {e_s}^{\ast}$ is divisible by $2$. This is a contradiction. 
	
	The rank of $H_2(W_f;\mathbb{Z})$ is thus $0$. The given cohomology class $u$ is the cohomology dual to some suitable 2nd integral homology class $v_u$ respecting the basis. $v_u$ is represented by a smooth embedding $e_0:S^2 \rightarrow M$. Assume that $q_f \circ e_0(S^2) \subset {\rm Int}\ W_f$. $\partial W_f$ is a $4$-dimensional, closed and orientable smooth manifold. If $m \geq 7$, then we can take a suitable embedding $e_0$ satisfying this assumption by the condition on the dimensions of the manifolds.
	$q_f \circ e_0$ can be regarded as a null-homotopic map as a map into ${\rm Int}\ W_f$ if $H_2(W_f;\mathbb{Z})$ is the trivial group. $H_2(W_f;\mathbb{Z})$ may be not free and in this case it is a finite commutative group which is not the trivial group. In such a case, by considering finitely many smooth embeddings smoothly isotopic to $e_0$ and a suitable connected sum of these copies, we have a desired situation. This is also regarded as reviewing some ingredients of Main Theorem 2 of \cite{kitazawa3} and its proof.
	
	The fact that $W_f$ is simply-connected plays an important role here. This yields the fact that $e_0:S^2 \rightarrow M$ or a suitable connected sum of finitely many smooth embeddings smoothly isotopic to $e_0$ is (smoothly) null-homotopic. This is a contradiction. 
	
	This completes the proof for the case $m \geq 7$.
	
	Hereafter assume that $m=6$.
	
	We can do so that $q_f \circ e_0(S^2) \bigcap \partial W_f$ is a finite set. 
If it is an empty set, then $v_u$ is represented by a smooth embedding $e_0:S^2 \rightarrow M$ and shown to be the zero element, which is a contradiction. This is also due to the fact that $W_f$ is simply-connected as before and also reviews the proof of Main Theorem 2 of \cite{kitazawa3}. The ranks of the groups $H_2(M;\mathbb{Z})$ and $H^4(M;\mathbb{Z})$ are $1$ by Poincar\'e duality for $M$. Some element represented as the $k_u$ times the Poincar\'e dual to $u$ is shown to be represented by ${q_f}^{-1}(\partial W_f)$ for a suitable non-zero integer $k_u \neq 0$. If $k_u=0$, then by a fundamental argument on the intersection, we can make the finite set $q_f \circ e_0(S^2) \bigcap \partial W_f$ empty and this is also a contradiction. 
By Proposition \ref{prop:2} with the facts that $M$ and $W_f$ are simply-connected and that $H^2(W_f;\mathbb{Z})$ is the trivial group, two bundles in Proposition \ref{prop:2} (\ref{prop:2.4}) are trivial. Here a well-known fact on classifications of linear bundles whose fibers are circles and whose structure groups consist of orientation preserving linear transformations, or the fact that such bundles over a fixed base space is classified by the 2nd integral cohomology group of the base space is a key. For this, consult \cite{milnorstasheff,steenrod} for example. 

We return to the fact that the linear bundle whose fiber is the $2$-dimensional unit disk in Proposition \ref{prop:2} (\ref{prop:2.4}) is trivial. This means that the self-intersection of $\partial W_f$ can be empty. 

Applying Poincar\'e duality or intersection theory for $M$ again, the cup product $u \cup u$ is shown to be the zero element of $H^4(M;\mathbb{Z})$. This is a contradiction.
	
	This completes the proof.

\end{proof}

Theorem \ref{thm:4} is obtained as a specific case of Theorem \ref{thm:5}. See also Theorem \ref{thm:3} again.

\subsection{The second method.}
The following theorem is a main theorem of \cite{kitazawa3} or Main Theorem 1 there if we replace $m \geq 6$ by $m \geq 7$. We also review the proof of this case here.
\begin{Thm}
	\label{thm:6}
	For a closed and simply-connected manifold $M$ of dimension $m \geq 6$ admitting a special generic map $f:M \rightarrow {\mathbb{R}}^5$ whose singular set $S(f)$ is connected, the cup product $c_1 \cup c_2$ is the zero element of $H^4(M;\mathbb{Z})$ for any pair $c_1,c_2 \in H^2(M;\mathbb{Z})$ of integral cohomology classes.
\end{Thm}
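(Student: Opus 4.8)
The plan is to transport the question from $M$ down to the base manifold $W_f$ supplied by Proposition \ref{prop:2}. Under the hypotheses $W_f$ is a compact orientable simply-connected $5$-manifold whose boundary $\partial W_f\cong S(f)$ is connected and nonempty; the two facts I want to extract are that $H^4(W_f;\mathbb{Z})=0$ and that $q_f^{\ast}$ is surjective in degree $2$, and because $q_f^{\ast}$ is a ring homomorphism these together give the theorem at once.

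First I would fix the data of Proposition \ref{prop:2}: the surjection $q_f\colon M\to W_f$ and the codimension-$0$ immersion $\bar f\colon W_f\to\mathbb{R}^5$ with $f=\bar f\circ q_f$. Since $q_f$ carries $S(f)$ diffeomorphically onto $\partial W_f$, the boundary $\partial W_f$ is connected and nonempty; and since $\bar f$ is an immersion between manifolds of the same dimension, $TW_f$ is trivial, so $W_f$ is orientable. Then, using Proposition \ref{prop:3} together with the handle picture from the proof of Theorem \ref{thm:2}, I would take an oriented $(m+1)$-manifold $W$ with $\partial W=M$, obtained from $M\times[-1,0]$ by attaching handles of index $>m-4\ge2$, so that $\pi_1(W)\cong\pi_1(M)=1$, together with a collapse $r_f\colon W\to W_f$ that is a homotopy equivalence and satisfies $q_f=r_f\circ i_M$. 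In particular $W_f$ is simply connected.

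The input $H^4(W_f;\mathbb{Z})=0$ comes from Lefschetz duality: $H^4(W_f;\mathbb{Z})\cong H_1(W_f,\partial W_f;\mathbb{Z})$, which in the long exact sequence of the pair is isomorphic to the kernel of $H_0(\partial W_f;\mathbb{Z})\to H_0(W_f;\mathbb{Z})$ once one uses $H_1(W_f;\mathbb{Z})=0$, and that kernel vanishes because both spaces are connected. For the surjectivity of $q_f^{\ast}\colon H^2(W_f;\mathbb{Z})\to H^2(M;\mathbb{Z})$, since $r_f^{\ast}$ is an isomorphism it suffices to make $i_M^{\ast}\colon H^2(W)\to H^2(M)$ surjective, i.e.\ to kill $H^3(W,M;\mathbb{Z})$; Lefschetz duality rewrites this group as $H_{m-2}(W;\mathbb{Z})\cong H_{m-2}(W_f;\mathbb{Z})$. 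For $m\ge7$ this vanishes because $W_f$, immersed in the non-closed manifold $\mathbb{R}^5$, is (simple) homotopy equivalent to a $4$-dimensional polyhedron while $m-2\ge5$; for $m=6$ it equals $H_4(W_f;\mathbb{Z})\cong H^1(W_f,\partial W_f;\mathbb{Z})$, which vanishes by the same connectedness argument as for the first input. Finally, given $c_1,c_2\in H^2(M;\mathbb{Z})$, I would write $c_i=q_f^{\ast}(d_i)$ with $d_i\in H^2(W_f;\mathbb{Z})$ and conclude $c_1\cup c_2=q_f^{\ast}(d_1\cup d_2)=0$, since $d_1\cup d_2\in H^4(W_f;\mathbb{Z})=0$.

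The one genuinely delicate point is the borderline case $m=6$ of the surjectivity step: there the vanishing $H_4(W_f;\mathbb{Z})=0$ really does use that $\partial W_f$, equivalently $S(f)$, is connected, whereas for $m\ge7$ it is forced merely by the CW-dimension of $W_f$ — this is why the statement specializes to Main Theorem 1 of \cite{kitazawa3} when $m\ge6$ is replaced by $m\ge7$. Along the way I would also be careful to justify the two applications of Lefschetz duality (orientability of $W_f$ from the immersion into $\mathbb{R}^5$, and of $W$ from Proposition \ref{prop:3}) and the simple connectivity of $W_f$ extracted from the handle-index bound of the proof of Theorem \ref{thm:2}.
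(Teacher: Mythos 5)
Your proposal is correct, and its first half coincides with the paper's: both set up $W$ and $W_f$ via Propositions \ref{prop:2} and \ref{prop:3}, deduce simple connectivity of $W_f$ from the handle-index bound in the proof of Theorem \ref{thm:2}, and use the exact sequence of $(W,M)$ together with Poincar\'e--Lefschetz duality and the connectedness of $\partial W_f$ to identify the relevant relative groups with $H_1(W_f,\partial W_f;\mathbb{Z})=0$ and $H^5(W_f;\mathbb{Z})=0$ (the paper phrases this as ${q_f}_{\ast}$ being an isomorphism on $H_2$, you as $q_f^{\ast}$ being surjective on $H^2$; these are equivalent here via universal coefficients since $H_1(M)=H_1(W_f)=0$). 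Where you genuinely diverge is the endgame. The paper, having reduced to $W_f$, still runs a lengthy geometric argument: it represents Poincar\'e duals by $3$-dimensional submanifolds $Y_j\subset W_f$, passes to the $(m-2)$-dimensional preimages ${q_f}^{-1}(Y_j)$, and analyzes their generic pairwise and self-intersections as unions of copies of $S^1\times S^{m-5}$ and homotopy spheres, each shown to be null-homotopic using the simple connectivity of $W_f$ and the connectedness of $\partial W_f$. You instead observe that the duality computation has already produced $H^4(W_f;\mathbb{Z})\cong H_1(W_f,\partial W_f;\mathbb{Z})=0$, so surjectivity of the ring homomorphism $q_f^{\ast}$ in degree $2$ kills all degree-$2$ cup products at once. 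Your shortcut is valid and considerably more economical for this particular statement; what the paper's intersection-theoretic machinery buys is reusability, since the neighbouring results (Theorems \ref{thm:5}, \ref{thm:7} and \ref{thm:8}) concern situations where $H^4(W_f;\mathbb{Z})$ or $H_2(W_f;\mathbb{Z})$ need not vanish and one must extract finer divisibility (mod $2$) information from the self-intersections, which the purely algebraic argument cannot see. You also correctly isolate the role of the connectedness of $S(f)$ in the borderline case $m=6$, which is exactly the new point relative to Main Theorem 1 of \cite{kitazawa3}.
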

\begin{proof}
	We apply some arguments on Proposition 3.10 of \cite{saeki}.
	This applies an exact sequence essentially regarded as the following  homology exact sequence for the pair $(W,M)$
	$$\rightarrow H_3(W,M;\mathbb{Z}) \rightarrow H_2(M;\mathbb{Z}) \rightarrow H_2(W;\mathbb{Z}) \rightarrow H_2(W,M;\mathbb{Z}) \rightarrow$$
	where we abuse $W$ in Proposition \ref{prop:3}. 
	
	Let $m=6$ for a while.
	$H_3(W,M;\mathbb{Z})$ is isomorphic to $H^{4}(W;\mathbb{Z})$,  $H^4(W_f;\mathbb{Z})$ and $H_1(W_f,\partial W_f;\mathbb{Z})$ by Poincar\'e duality theorem for $W$ and $W_f$ where we abuse $W_f$ similarly. $H_1(W_f;\mathbb{Z})$ is the trivial group since $W_f$ is simply-connected as presented in the proof of Theorem \ref{thm:5} for example. $H_1(W_f,\partial W_f;\mathbb{Z})$ is the trivial group from the assumption that $\partial W_f=q_f(S(f))$ is connected and the homology exact sequence for the pair $(W_f,\partial W_f)$. $H_2(W,M;\mathbb{Z})$ is isomorphic to $H^{5}(W;\mathbb{Z})$ by Poincar\'e duality theorem and it is isomorphic to $H^{5}(W_f;\mathbb{Z})$ and the trivial group since $W$ collapses to $W_f$, which has the homotopy type of a polyhedron whose dimension is smaller than $5$, by Proposition \ref{prop:3}. Thus ${q_f}_{\ast}:H_2(M;\mathbb{Z}) \rightarrow H_2(W_f;\mathbb{Z})$ is an isomorphism from Proposition \ref{prop:3}, saying that $q_f$ is represented as the composition of the inclusion $i_M:M \rightarrow W$ with the map $r_f:W \rightarrow W_f$ giving a collapsing.
	 
	Note that we already have this isomorphism for $m \geq 7$. We have this as presented in the beginning of the proof of Theorem \ref{thm:5}. The difference of the dimensions of the manifolds of the domains and the targets are $7-5=2$ and this is a key. $W$, collapsing to $W_f$, is obtained by attaching handles whose indices are greater than $7-5+1$=$3$ to $M \times \{0\} \subset M \times [-1,0]$ where $M$ or the boundary is identified with $M \times \{-1\}$.
	
	 Let $l \geq 0$  denote the rank of $H_2(W_f;\mathbb{Z})$. The case $l=0$ is trivial and we consider the case $l>0$. We can obtain a family $\{Y_j\}_{j=1}^l$ of $l$ 3-dimensional compact, connected and orientable manifolds smoothly embedded in $W_f$ in a way similar to that of $Y$ in the proof of Theorem \ref{thm:5}. We have a suitable 2nd integral homology classes forming a basis of a free subgroup of rank $l$ of $H_2(W_f;\mathbb{Z})$ in such a way that the integral homology classes represented by these $3$-dimensional manifolds are regarded as the Poincar\'e duals to the cohomology duals to these 2nd integral homology classes in the basis of the free subgroup of $H_2(W_f;\mathbb{Z})$ respecting the basis.
	We have the family $\{{q_f}^{-1}(Y_j)\}$ of ($m-2$)-dimensional closed, connected and orientable manifolds. The integral homology classes represented by these manifolds with suitable orientations are also regarded as the Poincar\'e duals to the cohomology duals to the 2nd integral homology classes in a suitable basis of a free subgroup of rank $l$ of $H_2(M;\mathbb{Z})$ respecting the basis. As the basis, we can take the set of all values of the isomorphism ${{q_f}_{\ast}}^{-1}:H_2(W_f;\mathbb{Z}) \rightarrow H_2(M;\mathbb{Z})$ at all elements of the basis of the subgroup of $H_2(W_f;\mathbb{Z})$ before. As done in the proof of our Theorem \ref{thm:5} and that of Main Theorem 1 of \cite{kitazawa3}, we investigate (generic) intersections of two submanifolds in $\{{q_f}^{-1}(Y_j)\}$ or (generic) self-intersections of single submanifolds there. The dimensions of these resulting submanifolds are $(m-2)+(m-2)-m=m-4$. 
	
	We discuss the integral homology class represented by some connected component of such a submanifold. Showing that the zero element $0 \in H_{m-4}(M;\mathbb{Z})$ is represented by such a connected submanifold completes our proof by virtue of the facts on the rank of $H_{2}(M;\mathbb{Z})$ and Poincar\'e duality. Most of these arguments are based on the proof of Main Theorem 1 of \cite{kitazawa3}.
	
	This ($m-4$)-dimensional submanifold is the disjoint union of finitely many copies of the product $S^1 \times S^{m-5}$ of an even number and ($m-4$)-dimensional homotopy spheres of an even number. The element $0$ is represented by each of the former manifolds diffeomorphic to $S^1 \times S^{m-1}$ due to the fact that $W_f$ is simply-connected. These manifolds are the preimages of circles smoothly embedded in ${\rm Int}\ Y$ and ${\rm Int}\ W_f$.
	
	Each of the ($m-4$)-dimensional spheres here
	is the preimage of a $1$-dimensional compact and connected manifold diffeomorphic to a closed interval and embedded smoothly and properly in $W_f$. In other words, the interior is embedded in ${\rm Int}\ W_f$, the boundary is in $\partial W_f$ and the preimage is regarded as the homotopy sphere of the domain of a special generic map into $\mathbb{R}$ in Theorem \ref{thm:1} (\ref{thm:1.0}). 
	
	Here we review a main ingredient of the proof of Main Theorem \ref{mthm:1} of \cite{kitazawa3} where we use a bit different notation from the original one. $W_f$ is simply-connected and $\partial W_f$ is connected. This yields a smooth homotopy $H_f:{S^{m-4}}^{\prime} \times [0,1] \rightarrow W_f$ from the composition of the original embedding of each ($m-4$)-dimensional homotopy sphere ${S^{m-4}}^{\prime}$ here into $M$ with $q_f$ to a constant map to a point in $\partial W_f$.
	
	For $0 \leq t \leq 1$, define $S_{f,H}(t)$ as the set of all points in ${S^{m-4}}^{\prime}$ such that the pairs of the points and $t$ are mapped to $\partial W_f$ by the homotopy.
	We can take the homotopy $H_f$ satisfying $S_{f,H}(t_1) \subset S_{f,H}(t_2)$ for $0<t_1<t_2<1$. We can also see that the original embedding of the ${S^{m-4}}^{\prime}$ into $M$ is null-homotopic.

	This shows that the cup product of two 2nd integral cohomology classes is always the zero element $0 \in H^4(M;\mathbb{Z})$ for any pair of 2nd integral cohomology classes of $H^2(M;\mathbb{Z})$. 
	
	This completes the proof.
	
\end{proof}
\begin{Thm}
	\label{thm:7}
	For a closed and simply-connected manifold $M$ of dimension $m \geq 4$ admitting a special generic map $f:M \rightarrow {\mathbb{R}}^{m-1}$ whose singular set $S(f)$ consists of exactly $l>0$ connected components, we have the following two.
	\begin{enumerate}
		\item \label{thm:7.1}
		 The rank of $H_2(M;\mathbb{Z})$ is greater than or equal to $l-1$. The sum of the rank of $H_2(W_f;\mathbb{Z})$ and $l-1$ and the rank of $H_2(M;\mathbb{Z})$ agree.
		\item \label{thm:7.2}
		In {\rm (}\ref{thm:7.1}{\rm )}, assume also that $H_2(W_f;\mathbb{Z})$ is of rank $0$. Then the cup product $c_1 \cup c_2$ is the zero element for any pair $c_1,c_2 \in H^2(M;\mathbb{Z})$ of 2nd integral cohomology classes. 
	\end{enumerate}

\end{Thm}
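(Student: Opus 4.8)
The plan is to run the codimension-one version ($n=m-1$, so $m-n=1$) of the arguments behind Theorems \ref{thm:5} and \ref{thm:6}, while keeping careful track of the $l$ components of $\partial W_f$. I would begin from the Stein factorization $f=\bar f\circ q_f$ of Proposition \ref{prop:2}: here $W_f$ is a compact connected $(m-1)$-dimensional manifold immersed in $\mathbb{R}^{m-1}$ by $\bar f$, and $q_f$ restricts to a diffeomorphism of $S(f)$ onto $\partial W_f$, whose components I denote $C_1,\dots,C_l$. Since $m-n=1$, Proposition \ref{prop:3} furnishes a smooth compact $(m+1)$-dimensional $W$ with $\partial W=M$ that collapses onto $W_f$, the collapsing $r_f\colon W\to W_f$ being a homotopy equivalence with $q_f=r_f\circ i_M$. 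As in the proof of Theorem \ref{thm:5}, $W$ is obtained from $M$ by attaching handles of index greater than $m-n+1=2$, so $\pi_1(W_f)\cong\pi_1(W)\cong\pi_1(M)$ is trivial; being a compact $(m-1)$-manifold immersed in $\mathbb{R}^{m-1}$, $W_f$ has the homotopy type of an $(m-2)$-dimensional polyhedron. Orienting $M$ (it is simply-connected) makes $W$, $W_f$, $\partial W_f$ and every $C_j$ orientable, so the Poincar\'e--Lefschetz dualities below all apply.

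For part (\ref{thm:7.1}) I would argue as follows. The homology exact sequence of $(W_f,\partial W_f)$, together with Lefschetz duality and simple-connectivity, yields $0\to\mathbb{Z}\to H_{m-2}(\partial W_f;\mathbb{Z})\cong\mathbb{Z}^{l}\to H_{m-2}(W_f;\mathbb{Z})\to 0$ and $H_1(W_f,\partial W_f;\mathbb{Z})\cong\mathbb{Z}^{l-1}$, so $H_{m-2}(W_f;\mathbb{Z})\cong\mathbb{Z}^{l-1}$ is freely generated by $[C_2],\dots,[C_l]$. Because $q_f$ maps each component $S(f)_j$ of the singular set diffeomorphically onto $C_j$, the homomorphism $q_{f*}\colon H_{m-2}(M;\mathbb{Z})\to H_{m-2}(W_f;\mathbb{Z})$ is onto, hence $q_f^{*}\colon H^{m-2}(W_f;\mathbb{Q})\to H^{m-2}(M;\mathbb{Q})$ is injective. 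Running the homology exact sequence of $(W,M)$ with Lefschetz duality on $W$ and $W\simeq W_f$ gives $H_2(W,M;\mathbb{Z})\cong H^{m-1}(W_f;\mathbb{Z})=0$ (top cohomology of an $(m-2)$-complex) and $H_3(W,M;\mathbb{Z})\cong H^{m-2}(W_f;\mathbb{Z})\cong H_1(W_f,\partial W_f;\mathbb{Z})\cong\mathbb{Z}^{l-1}$. Thus $q_{f*}\colon H_2(M;\mathbb{Z})\to H_2(W_f;\mathbb{Z})$ is onto with kernel the image of the connecting homomorphism $\partial\colon H_3(W,M;\mathbb{Z})\to H_2(M;\mathbb{Z})$, and since $\partial$ is identified, through the dualities, with $q_f^{*}$ in degree $m-2$, it is rationally injective; therefore its image has rank $l-1$ and the rank of $H_2(M;\mathbb{Z})$ equals the rank of $H_2(W_f;\mathbb{Z})$ plus $l-1$, in particular at least $l-1$.

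For part (\ref{thm:7.2}), assume the rank of $H_2(W_f;\mathbb{Z})$ is $0$. As $W_f$ is simply-connected, $H^2(W_f;\mathbb{Z})\cong{\rm Hom}(H_2(W_f;\mathbb{Z}),\mathbb{Z})$ is free, hence $H^2(W_f;\mathbb{Z})=0$. The circle bundle of Proposition \ref{prop:2} (\ref{prop:2.4.2}) over $W_f-{\rm Int}\,N(\partial W_f)\cong W_f$ is orientable and so has Euler class in $H^2(W_f;\mathbb{Z})=0$; it is therefore trivial, and restricting it to the boundary shows that the $2$-plane bundle $\xi$ of Proposition \ref{prop:2} (\ref{prop:2.4.1}) — of which it is the unit sphere bundle, and which is the normal bundle of $S(f)$ in $M$ with $S(f)$ as zero section — has vanishing Euler class, hence is trivial; so each $S(f)_j$ has trivial normal bundle in $M$. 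By part (\ref{thm:7.1}) the free abelian group $H_{m-2}(M;\mathbb{Z})\cong H^2(M;\mathbb{Z})$ has rank $l-1$, so the surjection $q_{f*}\colon H_{m-2}(M;\mathbb{Z})\to H_{m-2}(W_f;\mathbb{Z})\cong\mathbb{Z}^{l-1}$ is an isomorphism; consequently $[S(f)_2],\dots,[S(f)_l]$ is a basis of $H_{m-2}(M;\mathbb{Z})$ and its Poincar\'e duals form a basis $u_2,\dots,u_l$ of $H^2(M;\mathbb{Z})$. Finally $u_i\cup u_j={\rm PD}([S(f)_i]\cdot[S(f)_j])$ vanishes for $i\ne j$ since distinct components of $S(f)$ are disjoint, and for $i=j$ since $S(f)_i$, having trivial normal bundle, can be pushed off itself; as the $u_j$ form a basis, $c_1\cup c_2=0$ for every $c_1,c_2\in H^2(M;\mathbb{Z})$.

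The step I expect to be the main obstacle is the rank accounting in part (\ref{thm:7.1}): the exact sequence of $(W,M)$ at once places the rank of $H_2(M;\mathbb{Z})$ between the rank of $H_2(W_f;\mathbb{Z})$ and that plus $l-1$, and sharpening this to equality requires the injectivity of $\partial\colon H_3(W,M;\mathbb{Q})\to H_2(M;\mathbb{Q})$. This is exactly what is new relative to Theorem \ref{thm:6}, where $l=1$ forces $H_3(W,M;\mathbb{Z})=0$; handling it amounts to identifying $\partial$, via a chain of Poincar\'e--Lefschetz dualities, with $q_f^{*}$ in degree $m-2$ and then invoking the surjectivity of $q_{f*}$ coming from the components of $S(f)$ mapping onto those of $\partial W_f$. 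A secondary subtlety, needed to make part (\ref{thm:7.2}) hold integrally rather than only rationally, is that $H^2(W_f;\mathbb{Z})$ is genuinely zero (not merely torsion), which trivializes the normal bundles at issue, together with the fact that the Poincar\'e duals of the singular-set components furnish an honest $\mathbb{Z}$-basis of $H^2(M;\mathbb{Z})$.
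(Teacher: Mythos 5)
Your proof is correct, and its overall skeleton --- the Stein factorization, the $(m+1)$-dimensional $W$ collapsing to $W_f$, the homology exact sequence of $(W,M)$, and the Lefschetz-duality computations $H_2(W,M;\mathbb{Z})=0$ and $H_3(W,M;\mathbb{Z})\cong H^{m-2}(W_f;\mathbb{Z})\cong H_1(W_f,\partial W_f;\mathbb{Z})\cong\mathbb{Z}^{l-1}$ --- coincides with the paper's. Where you genuinely diverge is in how the rank \emph{equality} in (\ref{thm:7.1}) is forced. The paper proceeds geometrically: it chooses $l-1$ properly embedded arcs in $W_f$ joining a fixed boundary component $C_0$ to the other components, observes that their preimages are $2$-spheres $S^{2,j}$ in $M$, and uses the intersection numbers of these spheres with the preimages of the boundary components to exhibit a rank-$(l-1)$ free subgroup of $H_2(M;\mathbb{Z})$ that dies in $H_2(W;\mathbb{Z})$. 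You instead identify the connecting homomorphism $\partial\colon H_3(W,M;\mathbb{Z})\to H_2(M;\mathbb{Z})$, via naturality of Lefschetz duality, with $q_f^{\ast}$ in degree $m-2$, and deduce its rational injectivity from the surjectivity of $q_{f\ast}$ on $H_{m-2}$ --- which rests on the same geometric fact (the singular-set components map diffeomorphically onto the boundary components, whose classes generate $H_{m-2}(W_f;\mathbb{Z})$) but packages it algebraically and avoids constructing the spheres altogether. For (\ref{thm:7.2}) the two arguments are close: both reduce to the disjointness of distinct singular-set components and the possibility of pushing each component off itself, the latter coming from $H^2(W_f;\mathbb{Z})=0$ and the consequent triviality of the circle and disk bundles of Proposition \ref{prop:2} (\ref{prop:2.4}); the paper reaches the relevant basis of $H^2(M;\mathbb{Z})$ through the spheres $S^{2,j}$ and their duality with the singular components, while you read it off directly from the isomorphism induced by $q_{f}$ on $H_{m-2}$. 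Your route is somewhat cleaner at the one point the paper leaves most implicit (why the image of $\partial$ has full rank $l-1$), at the cost of relying on the commutativity of the duality square relating $\partial$ and $i_M^{\ast}$, which you should state and justify explicitly if you write this up.
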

\begin{proof}
	We apply a method as in the proof of Theorem \ref{thm:6} where we abuse the notation such as $W$, $W_f$ and $q_f:M \rightarrow W_f$. 
	
   We prove (\ref{thm:7.1}).
	
	We consider the homology exact sequence for
		$(W,M)$ again
	$$\rightarrow H_3(W,M;\mathbb{Z}) \rightarrow H_2(M;\mathbb{Z}) \rightarrow H_2(W;\mathbb{Z}) \rightarrow H_2(W,M;\mathbb{Z}) \rightarrow$$
	and $H_3(W,M;\mathbb{Z})$ is isomorphic to $H^{m-2}(W;\mathbb{Z})$, $H^{m-2}(W_f;\mathbb{Z})$ and $H_1(W_f,\partial W_f;\mathbb{Z})$ by Poincar\'e duality theorem for $W$ and $W_f$ and Proposition \ref{prop:3}, saying that $W$ collapses to $W_f$. $H_1(W_f;\mathbb{Z})$ is the trivial group by Proposition \ref{prop:3} and the proof of Theorem \ref{thm:2}, $M$ and $W_f$ are simply-connected. This is also in our proof of Theorem \ref{thm:5} for example. The homology exact sequence for $(W_f,\partial W_f)$ implies that $H_1(W_f,\partial W_f;\mathbb{Z})$ is free and of rank $l-1$. $H_3(W,M;\mathbb{Z})$, $H^{m-2}(W;\mathbb{Z})$, $H^{m-2}(W_f;\mathbb{Z})$ and $H_1(W_f,\partial W_f;\mathbb{Z})$ are shown to be isomorphic to this free group by the fact shown before. $H^{m-1}(W;\mathbb{Z})$ and $H_2(W,\partial W;\mathbb{Z})$ are isomorphic by Poincar\'e duality theorem and trivial groups since $W$ is a compact ($m-1$)-dimensional manifold smoothly immersed into ${\mathbb{R}}^{m-1}$ and collapses to $W_f$, having the homotopy type of a polyhedron whose dimension is smaller than $m-1$, by Proposition \ref{prop:3}. 
	
	We can take $l-1$ $1$-dimensional compact and connected manifolds diffeomorphic to a closed interval which are smoothly and disjointly embedded in $W_f$ and enjoy the following properties.
	\begin{itemize}
		\item There exists a basis of $H_1(W_f,\partial W_f;\mathbb{Z})$ and each of the elements of the basis is represented by one of the $l-1$ $1$-dimensional manifolds.
		\item For each of the $1$-dimensional manifolds, the interior is in the interior ${\rm Int}\ W_f$ and the boundary, consisting of exactly two points, is in the boundary $\partial W_f$. In other words, they are properly embedded in $W_f$.
		\item There exists exactly one connected component $C_0$ of the boundary $\partial W_f$ and this contains exactly one point of the boundary of each of these $l-1$ $1$-dimensional manifolds.
		\item For arbitrary distinct two $1$-dimensional manifolds of these $l-1$ manifolds, choose one suitable point in the boundary of each of the two, they are in distinct connected components of $\partial W_f$ which are not the connected component $C_0$ just before.  
		\item The preimage of each of these $l-1$ $1$-dimensional manifolds is regarded as the $2$-dimensional sphere $S^{2,j}$ of the domain of a special generic map into $\mathbb{R}$.
	\end{itemize}
	$H^1(W_f,\partial W_f;\mathbb{Z})$ is isomorphic to $H_1(W_f,\partial W_f;\mathbb{Z})$ and $H_{m-2}(W_f;\mathbb{Z})$ is isomorphic to these groups by Poincar\'e duality theorem. 
	We can also see the following three by the observation of 
	the intersections and Poincar\'e duality.
	\begin{itemize}
		\item All integral homology classes represented by the $l-1$ $2$-dimensional spheres in $\{S^{2,j}\}$ form a basis of a subgroup of $H_2(M;Z)$.
		\item Consider the integral homology class represented by the preimage ${q_f}^{-1}(C)$ of a connected component $C$ of the $l-1$ connected components of $\partial W_f$ different from the connected component $C_0$. The set of all $l-1$ homology classes here form a basis of a subgroup of $H_{m-2}(M;Z)$.
		\item The subgroup generated by the former basis is mapped to the trivial subgroup of $H_2(W;\mathbb{Z})$ by the homomorphism ${i_M}_{\ast}$ induced by the canonical inclusion $i_M:M \rightarrow W$: note that this homomorphism is also in the homology exact sequence for $(W,M)$.
		\end{itemize}
	More precisely, for each $S^{2,j}$ in $\{S^{2,j}\}$, exactly one connected component $C$ of the $l-1$ connected components of $\partial W_f$ different from the connected component $C_0$ and $S^{2,j}$ is not disjoint and they have exactly one common point. This yields the two bases in the first two facts here. Furthermore, the subgroup generated by the first basis is mapped to the trivial subgroup of $H_2(W;\mathbb{Z})$ as in the third fact here since $W$ collapses to $W_f$ via a PL map $r_f$ satisfying $q_f=r_f \circ i_M$ in Proposition \ref{prop:3}.
	
  Remember that $H_2(W,M;\mathbb{Z})$ has been shown to be the trivial group before.
  $H_3(W,M;\mathbb{Z})$ has been shown to be isomorphic to $H^{m-2}(W;\mathbb{Z})$ and $H^{m-2}(W_f;\mathbb{Z})$, free and of rank $l-1$.
   Thus the sum of the rank of $H_3(W,M;\mathbb{Z})$ and that of $H_2(W;\mathbb{Z})$ is same as that of $H_2(M;\mathbb{Z})$. $W$ collapses to $W_f$ and $H_2(W;\mathbb{Z})$ is replaced by $H_2(W_f;\mathbb{Z})$. The rank of $H_3(W,M;\mathbb{Z})$ is $l-1$.

This completes the proof of (\ref{thm:7.1}).	
  
  We show (\ref{thm:7.2}). 

$H_2(M;\mathbb{Z})$ is of rank $l-1$ in the case where the rank of $H_2(W_f;\mathbb{Z})$ is $0$. $H_2(M;\mathbb{Z})$ is represented as the internal direct sum of the free commutative subgroup having the basis consisting of exactly $l-1$ elements represented by the $l-1$ spheres in $\{S^{2,j}\}$ before and a finite commutative group. We can consider the Poincar\'e duals to the cohomology duals to the elements in the basis respecting it and regard that they are represented by the $l-1$ connected components of the singular set of $f$ different from $C_0$ as before. Note that the cohomology duals to the elements in the basis of the free commutative subgroup of $H_2(M;\mathbb{Z})$ respecting it form a basis of $H^2(M;\mathbb{Z})$. 
  
 We can move each connected component of the $l-1$ connected components of the singular set of $f$ different from $C_0$ by a smooth isotopy so that the resulting image and the original image or the connected component of the singular set are disjoint in $M$. In other words, we can have a {\it self-intersection} which is empty. We can explain about this fact as in the end of the proof of Theorem \ref{thm:5}. $H_2(W_f;\mathbb{Z})$ is a finite group and $H^2(W_f;\mathbb{Z})$ is the trivial group. 

We can argue similarly to complete the proof of (\ref{thm:7.2}).

This completes the proof.
\end{proof}
\begin{proof}[A proof of Theorem \ref{thm:4} via Theorems \ref{thm:6} and \ref{thm:7}]
	Suppose that ${\mathbb{C}P}^3$ admits a special generic map $f$ into ${\mathbb{R}}^5$. Then, the singular set is not connected and consists of exactly two connected components by Theorem \ref{thm:6} and Theorem \ref{thm:7} (\ref{thm:7.1}). Furthermore, the rank of $H_2({\mathbb{C}P}^3;\mathbb{Z})$ is $1$ and the rank of $H_2(W_f;\mathbb{Z})$ is $0$. This contradicts Theorem \ref{thm:7} (\ref{thm:7.2}). 
\end{proof}
\subsection{The third method.}
We respect philosophy of so-called Rokhlin's theorem on restrctions on the integral cohomology rings and the {\it signatures} of $4$-dimensional closed and oriented smooth manifolds. The {\it signature} of a compact and oriented manifold is defined uniquely from the cohomology ring and the orientation. The following theorem is our main theorem of this subsection. 

\begin{Thm}
\label{thm:8}
If a $6$-dimensional closed and simply-connected spin manifold $M$ has an element $u \in H^2(M;\mathbb{Z})$ whose square $u \cup u \in H^4(M;\mathbb{Z})$ is not divisible by $2$, then it admits no special generic maps into ${\mathbb{R}}^5$.
\end{Thm}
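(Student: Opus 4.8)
The plan is to argue by contradiction and extract a divisibility constraint on $u \cup u$ from the structure theory of special generic maps, exactly in the spirit of Theorem \ref{thm:5} (whose $m=6$ case is essentially this statement once one notes that a $6$-dimensional closed simply-connected manifold with $H^2 \cong \mathbb{Z}$ and odd square of the generator is automatically spin by the Wu formula, so $\mathrm{Sq}^2$ acting on $H^2$ via $x \mapsto x \cup x$ is detected integrally rather than mod $2$). Suppose $f \colon M \to \mathbb{R}^5$ is special generic. First I would invoke Propositions \ref{prop:2} and \ref{prop:3}: we obtain the $5$-dimensional compact connected manifold $W_f$, smoothly immersed into $\mathbb{R}^5$ (hence spin, with trivial normal bundle data available), the quotient map $q_f \colon M \to W_f$, and the $(m+1)=7$-dimensional compact connected manifold $W$ with $\partial W = M$ collapsing onto $W_f$. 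As in the proof of Theorem \ref{thm:2}, $W$ is built from $M \times [-1,0]$ by attaching handles of index $> m - n + 1 = 2$, so $\pi_1(W_f)$ is trivial, and since $M$ and $W_f$ are simply-connected the homology classes we use may be represented by compact oriented smooth submanifolds via Thom's theorem.

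The heart of the argument is the same three-case analysis on the rank of $H_2(W_f;\mathbb{Z})$ as in Theorem \ref{thm:5}. If the rank is $\geq 1$, then pulling back a primitive infinite-order class $e \in H_2(W_f;\mathbb{Z})$ (represented by an embedded $S^2$ in $\mathrm{Int}\, W_f$) along $q_f$ gives a circle bundle over $S^2$; by deforming the sphere across $\partial W_f$ one produces a ``variant section'' $s \colon S^2 \to M$ carrying a primitive infinite-order class $e_s \in H_2(M;\mathbb{Z})$. Since $H_2(M;\mathbb{Z}) \cong \mathbb{Z}$ is rank one, $e_s$ generates, so $u = e_s^{\ast}$ up to sign. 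Then the Poincar\'e dual of $e$ in $W_f$ is represented by a properly embedded $3$-manifold $Y$; $Y$ is spin because it is orientable of dimension $3$, and $W_f$ is spin because it immerses in $\mathbb{R}^5$. Taking a generic self-intersection of $Y$ inside $W_f$ and pulling back to a generic self-intersection of $q_f^{-1}(Y)$ in $M$, one shows, working mod $2$, that the self-intersection contributes copies of $S^1 \times S^{m-5} = S^1 \times S^1$ (even in number, killed since $W_f$ is simply-connected) and $(m-4)$-dimensional homotopy spheres $S^2$ (even in number, null-homotopic since $W_f$ is simply-connected, by the monotone-homotopy argument $S_{f,H}(t_1) \subset S_{f,H}(t_2)$ used in Theorem \ref{thm:6}). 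Summing the fundamental classes of all these components gives $0 \in H_{m-4}(M;\mathbb{Z}/2\mathbb{Z})$, which forces $e_s^{\ast} \cup e_s^{\ast} \equiv 0 \pmod 2$, i.e.\ $u \cup u$ divisible by $2$, contradicting the hypothesis. If instead $H_2(W_f;\mathbb{Z})$ has rank $0$, then since $m=6$ one first arranges $q_f \circ e_0(S^2) \cap \partial W_f$ finite for an embedding $e_0$ representing the generator $v_u$ with $\mathrm{PD}$-dual $u$; using that $M$ and $W_f$ are simply-connected and $H^2(W_f;\mathbb{Z})$ is finite (so the oriented circle bundle and the $D^2$-bundle of Proposition \ref{prop:2}(\ref{prop:2.4}) are classified by $H^2$ of the base and hence trivial), one makes $q_f \circ e_0(S^2) \cap \partial W_f$ empty, so $q_f \circ e_0$ (or a connected sum of isotopic copies, to handle torsion in $H_2(W_f;\mathbb{Z})$) is null-homotopic in $W_f$; pushing this back, $u \cup u = 0 \in H^4(M;\mathbb{Z})$, again contradicting the hypothesis.

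The main obstacle, and the only place where $m = 6$ (rather than $m \geq 7$) genuinely matters, is the rank-zero case: for $m \geq 7$ one has enough dimensions to push $e_0(S^2)$ entirely off the $4$-manifold $\partial W_f$ by a transversality count, but for $m = 6$ this fails and one must instead exploit the triviality of the relevant linear bundles over $\partial W_f$ — which is where the \emph{spin} hypothesis on $M$ (hence on $W_f$, via the immersion into $\mathbb{R}^5$) is used to control the self-intersection data and conclude that the bundle with fiber $D^2$ in Proposition \ref{prop:2}(\ref{prop:2.4}) is trivial. The rank-$\geq 1$ case's delicate point is verifying that the mod-$2$ self-intersection of $q_f^{-1}(Y)$ has exactly the claimed form (even numbers of $S^1 \times S^1$'s and of homotopy $S^2$'s); this rests on the fibered structure of $q_f$ over $W_f - \mathrm{Int}\, N(\partial W_f)$ and on the spin condition guaranteeing that the normal data of the self-intersection strata behaves as in the orientable $3$-dimensional case, so that the count is controlled by $H_1(Y,\partial Y;\mathbb{Z}/2\mathbb{Z})$. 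Once these geometric inputs are in place, Theorem \ref{thm:3} shows $\mathbb{C}P^3$ satisfies the hypotheses (it is spin since $3$ is odd, and $u_0 \cup u_0$ generates $H^4$, in particular is not divisible by $2$), so Theorem \ref{thm:4} follows immediately.
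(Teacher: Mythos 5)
There is a genuine gap: your argument proves (the $m=6$ case of) Theorem \ref{thm:5}, not Theorem \ref{thm:8}. Theorem \ref{thm:8} does \emph{not} assume $H^2(M;\mathbb{Z})\cong\mathbb{Z}$; its hypotheses are only that $M$ is spin and that \emph{some} class $u$ has $u\cup u$ not divisible by $2$. Your case analysis on the rank of $H_2(W_f;\mathbb{Z})$ leans on rank-one-ness of $H_2(M;\mathbb{Z})$ at two essential points: the contradiction you would extract when two independent infinite-order classes appear (this only contradicts something if the rank of $H_2(M;\mathbb{Z})$ is $1$), and the identification ``$e_s$ generates, so $u=e_s^{\ast}$ up to sign'' in the rank-one case. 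Neither step is available under the hypotheses of Theorem \ref{thm:8}, and your opening claim that the two theorems are essentially the same statement is not correct: neither set of hypotheses implies the other, and the paper needs the stated generality, since Main Theorem \ref{mthm:2} applies Theorem \ref{thm:8} to $6$-dimensional Bott manifolds whose second cohomology has rank $3$, where Theorem \ref{thm:5} says nothing.

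The paper's actual proof is organized differently precisely to avoid any rank restriction. Using Propositions \ref{prop:4} and \ref{prop:5} (the rank identity involving the number $l$ of connected components of $S(f)$), it builds a basis of the image $H_{2,\mathbb{Z}}(M;\mathbb{Z}/2\mathbb{Z})$ of the free part of $H_2(M;\mathbb{Z})$ out of two families of embedded $2$-spheres: the $l-1$ spheres $S^{2,j}$ arising as preimages of properly embedded arcs joining distinct boundary components of $W_f$, and the ``variant sections'' $s_{2,j}$ over spheres representing a basis of $H_{2,\mathbb{Z}}(W_f;\mathbb{Z}/2\mathbb{Z})$. It then identifies the Poincar\'e duals of the corresponding dual cohomology classes with the preimages of boundary components $C_{j^{\prime}}$ and of the $3$-manifolds $Y_{2,j}$, and shows --- using the spin hypothesis to control the normal bundle of each singular-set component and the parity of $s_{2,j}(S^2)\cap\partial W_f$ --- that the square of the dual of \emph{every} basis element is divisible by $2$. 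Since $(a+b)\cup(a+b)=a\cup a+b\cup b$ over $\mathbb{Z}/2\mathbb{Z}$, this forces $u\cup u$ to be divisible by $2$ for every integral class $u$, giving the contradiction. To repair your proposal you would need to replace the rank-one dichotomy by this basis construction (or something equivalent), and to use the spin condition where the paper does, rather than only to trivialize the $D^2$-bundle of Proposition \ref{prop:2} (\ref{prop:2.4}).
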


\begin{Prop}
	\label{prop:4}
For a closed and simply-connected manifold $M$ of dimension $m \geq 4$ admitting a special generic map $f:M \rightarrow {\mathbb{R}}^{m-1}$ whose singular set $S(f)$ consists of exactly $l>0$ connected components we have the following two where we abuse the notation such as $W_f$ similarly.
\begin{itemize}
	\item The rank of $H_2(M;\mathbb{Z}/2\mathbb{Z})$ is greater than or equal to $l-1$. 
	\item The sum of the rank of $H_2(W_f;\mathbb{Z}/2\mathbb{Z})$ and $l-1$ and the rank of $H_2(M;\mathbb{Z}/2\mathbb{Z})$ agree.
\end{itemize}
\end{Prop}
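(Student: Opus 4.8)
\emph{The plan} is to run the proof of Theorem \ref{thm:7} (\ref{thm:7.1}) essentially word for word, but with the field $\mathbb{Z}/2\mathbb{Z}$ in place of $\mathbb{Z}$ as coefficient ring throughout. Over a field every group in sight is a vector space, so ``rank'' is just dimension and is additive along exact sequences, and moreover no orientations of $W$, $W_f$, $\partial W_f$ or of the submanifolds below ever need to be chosen. Concretely, suppose a special generic map $f:M \rightarrow {\mathbb{R}}^{m-1}$ whose singular set has exactly $l$ connected components exists, and take the $(m+1)$-dimensional compact and connected manifold $W$ with $\partial W = M$ collapsing to the $(m-1)$-dimensional compact manifold $W_f$ immersed in ${\mathbb{R}}^{m-1}$ by $\bar{f}$, as in Proposition \ref{prop:3}; since $\partial W_f = q_f(S(f))$ is non-empty, $W_f$ has the homotopy type of a polyhedron of dimension at most $m-2$. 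Feeding this into the homology exact sequence of $(W,M)$ with $\mathbb{Z}/2\mathbb{Z}$ coefficients,
$$\rightarrow H_3(W,M;\mathbb{Z}/2\mathbb{Z}) \rightarrow H_2(M;\mathbb{Z}/2\mathbb{Z}) \rightarrow H_2(W;\mathbb{Z}/2\mathbb{Z}) \rightarrow H_2(W,M;\mathbb{Z}/2\mathbb{Z}) \rightarrow$$
and using Poincar\'e duality for $W$ together with the collapse onto $W_f$, the last group is isomorphic to $H^{m-1}(W;\mathbb{Z}/2\mathbb{Z}) \cong H^{m-1}(W_f;\mathbb{Z}/2\mathbb{Z}) = 0$, while $H_3(W,M;\mathbb{Z}/2\mathbb{Z}) \cong H^{m-2}(W;\mathbb{Z}/2\mathbb{Z}) \cong H^{m-2}(W_f;\mathbb{Z}/2\mathbb{Z}) \cong H_1(W_f,\partial W_f;\mathbb{Z}/2\mathbb{Z})$ by Poincar\'e duality for $W_f$. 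Since $W_f$ is simply-connected (as in the proof of Theorem \ref{thm:7}), $H_1(W_f;\mathbb{Z}/2\mathbb{Z}) = 0$, so the homology exact sequence of $(W_f,\partial W_f)$ together with the fact that $\partial W_f$ has exactly $l$ components shows that $H_1(W_f,\partial W_f;\mathbb{Z}/2\mathbb{Z})$ has rank $l-1$; and $H_2(W;\mathbb{Z}/2\mathbb{Z}) \cong H_2(W_f;\mathbb{Z}/2\mathbb{Z})$ by the collapse.

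Reading the displayed sequence over the field $\mathbb{Z}/2\mathbb{Z}$, the homomorphism $H_2(M;\mathbb{Z}/2\mathbb{Z}) \rightarrow H_2(W;\mathbb{Z}/2\mathbb{Z})$ is surjective, its kernel is the image of the connecting homomorphism $\partial:H_3(W,M;\mathbb{Z}/2\mathbb{Z}) \rightarrow H_2(M;\mathbb{Z}/2\mathbb{Z})$, and therefore the rank of $H_2(M;\mathbb{Z}/2\mathbb{Z})$ equals the rank of $H_2(W_f;\mathbb{Z}/2\mathbb{Z})$ plus the rank of the image of $\partial$, the latter being at most $l-1$. Hence both assertions follow at once if one shows that $\partial$ is injective, i.e. that its image has rank exactly $l-1$.

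For this I would reproduce the geometric construction from the proof of Theorem \ref{thm:7}: choose $l-1$ pairwise disjoint arcs $I_1,\dots,I_{l-1}$ properly embedded in $W_f$ whose classes form a basis of $H_1(W_f,\partial W_f;\mathbb{Z}/2\mathbb{Z})$, arranged so that $I_j$ has one endpoint on a fixed component $C_0 \subset \partial W_f$ and its other endpoint on the $j$-th of the remaining $l-1$ components $C_j$, with no other intersection with $\partial W_f$. Using the disk- and sphere-bundle structure of $q_f$ in Proposition \ref{prop:2} (\ref{prop:2.4}) (here $m-n=1$, so the fibers are $D^2$ and $S^1$), each preimage $S^{2,j}:=q_f^{-1}(I_j)$ is a smoothly embedded $2$-sphere in $M$ — it is the domain of a special generic map into $\mathbb{R}$, as already noted in the proof of Theorem \ref{thm:7} — and $q_f^{-1}(C_k)$ is the component of $S(f)$ lying over $C_k$, an $(m-2)$-dimensional closed submanifold of $M$. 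Because $I_j$ is contractible in $W_f$ and $r_f:W \rightarrow W_f$ is a homotopy equivalence with $q_f = r_f \circ i_M$, the inclusion $S^{2,j} \hookrightarrow W$ is null-homotopic, so $[S^{2,j}]$ lies in the kernel of $H_2(M;\mathbb{Z}/2\mathbb{Z}) \rightarrow H_2(W;\mathbb{Z}/2\mathbb{Z})$, that is, in the image of $\partial$. On the other hand the $\mathbb{Z}/2\mathbb{Z}$-intersection number of $S^{2,j}$ with $q_f^{-1}(C_k)$ in $M^m$ is $1$ for $j=k$ and $0$ for $j \neq k$, since $I_j$ meets $C_k$ transversally in a single point precisely when $j=k$; Poincar\'e duality over $\mathbb{Z}/2\mathbb{Z}$ then forces $[S^{2,1}],\dots,[S^{2,l-1}]$ to be linearly independent, so the image of $\partial$ has rank at least $l-1$, hence exactly $l-1$. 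Substituting this into the rank identity above yields the second bullet, and the first bullet is immediate. The main thing to watch is exactly what is already handled in Theorem \ref{thm:7}: that the arcs $I_j$ admit the prescribed matching of endpoints to boundary components (this is where simple-connectedness of $W_f$ and connectedness of $M$ are used) and that the underlying transversality and bundle-triviality statements hold; all of these are routine mod $2$ translations of the steps carried out for $\mathbb{Z}$ coefficients, now with no orientations required.
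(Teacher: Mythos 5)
Your proposal is correct and is exactly the route the paper intends: the paper's entire proof of this proposition is the one-line remark that it is proved ``as Theorem \ref{thm:7} (\ref{thm:7.1})'', i.e.\ by rerunning that argument with $\mathbb{Z}/2\mathbb{Z}$ coefficients, which is what you carry out (exact sequence of $(W,M)$, Lefschetz duality giving $H_2(W,M;\mathbb{Z}/2\mathbb{Z})=0$ and $H_3(W,M;\mathbb{Z}/2\mathbb{Z})\cong H_1(W_f,\partial W_f;\mathbb{Z}/2\mathbb{Z})$ of rank $l-1$, and the arcs $I_j$ with preimage spheres $S^{2,j}$ paired against the components $q_f^{-1}(C_k)$ by mod $2$ intersection numbers). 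Your explicit observation that over the field $\mathbb{Z}/2\mathbb{Z}$ no orientations are needed and the count reduces to showing the connecting homomorphism is injective is a clean packaging of the same steps, so no gap.
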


We also have the following proposition.

\begin{Prop}
	\label{prop:5}
Consider the situation of Proposition \ref{prop:4}.

We can decompose $H_2(W_f;\mathbb{Z})$ into a suitable internal direct sum $G_{W_f,{\rm Free}} \oplus G_{W_f,{\rm Finite}}$ where the former summand $G_{W_f,{\rm Free}}$ and the latter summand $G_{W_f,{\rm Finite}}$ are a free commutative subgroup and a finite commutative subgroup respectively.
Let $H_{2,\mathbb{Z}}(W_f;\mathbb{Z}/2\mathbb{Z}) \subset H_2(W_f;\mathbb{Z}/2\mathbb{Z})$ be the image of the homomorphism from $G_{W_f,{\rm Free}}$ into
$H_2(W_f;\mathbb{Z}/2\mathbb{Z})$ obtained canonically by considering the canonical homomorphism or the quotient map from $\mathbb{Z}$ onto $\mathbb{Z}/2\mathbb{Z}$. 

We can decompose $H_2(M;\mathbb{Z})$ into a suitable internal direct sum $G_{M,{\rm Free}} \oplus G_{M,{\rm Finite}}$ where the former summand $G_{M,{\rm Free}}$ and the latter summand $G_{M,{\rm Finite}}$ are a free commutative subgroup and a finite commutative subgroup respectively.

Let $H_{2,\mathbb{Z}}(M;\mathbb{Z}/2\mathbb{Z}) \subset H_2(M;\mathbb{Z}/2\mathbb{Z})$ be the image of the homomorphism from $G_{M,{\rm Free}}$ into $H_2(M;\mathbb{Z}/2\mathbb{Z})$ obtained similarly.	

Then we have the following properties.

\begin{itemize}
	\item The rank of $H_{2,\mathbb{Z}}(M;\mathbb{Z}/2\mathbb{Z})$ is greater than or equal to $l-1$. 
	\item The sum of the rank of $H_{2,\mathbb{Z}}(W_f;\mathbb{Z}/2\mathbb{Z})$ and $l-1$ and the rank of $H_{2,\mathbb{Z}}(M;\mathbb{Z}/2\mathbb{Z})$ agree.
\end{itemize}
\end{Prop}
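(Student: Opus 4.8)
The plan is to deduce Proposition \ref{prop:5} from Theorem \ref{thm:7} (\ref{thm:7.1}) together with an elementary remark on reduction of coefficients modulo $2$. First note that the hypotheses carried over from Proposition \ref{prop:4} are precisely those of Theorem \ref{thm:7}: $M$ is an $m$-dimensional closed and simply-connected manifold with $m \geq 4$, and $f \colon M \to {\mathbb{R}}^{m-1}$ is a special generic map whose singular set $S(f)$ has exactly $l>0$ connected components. Hence Theorem \ref{thm:7} (\ref{thm:7.1}) already tells us that the rank of $H_2(M;\mathbb{Z})$ is at least $l-1$ and that it equals the sum of the rank of $H_2(W_f;\mathbb{Z})$ and $l-1$ (here $W_f$ is as in Proposition \ref{prop:2}, abused as in the statement).

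The key step is to identify the rank of $H_{2,\mathbb{Z}}(M;\mathbb{Z}/2\mathbb{Z})$ with the rank of $H_2(M;\mathbb{Z})$, and likewise the rank of $H_{2,\mathbb{Z}}(W_f;\mathbb{Z}/2\mathbb{Z})$ with the rank of $H_2(W_f;\mathbb{Z})$. Let $X$ stand for $M$ or $W_f$, write $G_X:=G_{X,{\rm Free}}$ for the chosen free summand, of rank $r$, and let $\rho \colon H_2(X;\mathbb{Z}) \to H_2(X;\mathbb{Z}/2\mathbb{Z})$ be the homomorphism induced by the quotient map $\mathbb{Z} \to \mathbb{Z}/2\mathbb{Z}$; by definition $H_{2,\mathbb{Z}}(X;\mathbb{Z}/2\mathbb{Z})$ is the image $\rho(G_X)$. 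The kernel of $\rho$ is $2H_2(X;\mathbb{Z})$, and because the complementary summand $G_{X,{\rm Finite}}$ is a torsion group, any element of $G_X$ that is divisible by $2$ inside $H_2(X;\mathbb{Z})$ is already divisible by $2$ inside $G_X$; that is, $G_X \cap 2H_2(X;\mathbb{Z}) = 2G_X$. Hence $\rho(G_X)$ is isomorphic to $G_X/2G_X$, a $\mathbb{Z}/2\mathbb{Z}$-vector space of dimension $r$, so its rank is $r$. Applying this with $X=M$ and with $X=W_f$ gives the two desired equalities of ranks.

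Combining these two equalities with the conclusion of Theorem \ref{thm:7} (\ref{thm:7.1}) yields the two assertions of Proposition \ref{prop:5} at once: the rank of $H_{2,\mathbb{Z}}(M;\mathbb{Z}/2\mathbb{Z})$ is at least $l-1$, and the sum of the rank of $H_{2,\mathbb{Z}}(W_f;\mathbb{Z}/2\mathbb{Z})$ and $l-1$ agrees with the rank of $H_{2,\mathbb{Z}}(M;\mathbb{Z}/2\mathbb{Z})$. The only point needing any care — and so the "main obstacle", modest though it is — is precisely the identification of the rank of the mod $2$ image $H_{2,\mathbb{Z}}(X;\mathbb{Z}/2\mathbb{Z})$ with the integral rank, i.e. the identity $G_X \cap 2H_2(X;\mathbb{Z}) = 2G_X$, which in particular shows that this rank does not depend on the auxiliary choice of the decomposition $H_2(X;\mathbb{Z}) = G_{X,{\rm Free}} \oplus G_{X,{\rm Finite}}$. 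Alternatively, one may repeat the proof of Proposition \ref{prop:4} but keep integral coefficients at the stage where the $l-1$ classes represented by the spheres $S^{2,j}$ and by the preimages ${q_f}^{-1}(C)$ are constructed, and then reduce those integral classes modulo $2$; this records the same count while also exhibiting explicit generators of $H_{2,\mathbb{Z}}(M;\mathbb{Z}/2\mathbb{Z})$.
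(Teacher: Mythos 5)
Your argument is correct, but it reaches the conclusion by a different route than the paper. The paper's (very terse) proof instructs the reader to re-run the geometric argument of Theorem \ref{thm:7} (\ref{thm:7.1}) — the interval/sphere construction, the intersection counts, Poincar\'e duality — with $\mathbb{Z}/2\mathbb{Z}$ coefficients; this has the side benefit of producing the explicit mod $2$ generators ($\{e_{S^{2,j}}\}$ and $\{e_{s,2,j}\}$) that STEP 1 of the proof of Theorem \ref{thm:8} actually uses afterwards. You instead treat Theorem \ref{thm:7} (\ref{thm:7.1}) as a black box and add a purely algebraic lemma: for $X=M,W_f$, the reduction $\rho\colon H_2(X;\mathbb{Z})\to H_2(X;\mathbb{Z}/2\mathbb{Z})$ has kernel $2H_2(X;\mathbb{Z})$ (Bockstein sequence for $0\to\mathbb{Z}\xrightarrow{2}\mathbb{Z}\to\mathbb{Z}/2\mathbb{Z}\to 0$), and $G_{X,{\rm Free}}\cap 2H_2(X;\mathbb{Z})=2G_{X,{\rm Free}}$ because the complementary summand is torsion, so $\rho(G_{X,{\rm Free}})\cong G_{X,{\rm Free}}/2G_{X,{\rm Free}}$ has $\mathbb{Z}/2\mathbb{Z}$-rank equal to the integral rank. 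That identification is exactly right, and since the statement of Proposition \ref{prop:5} concerns only ranks (not the subgroups $H_{2,\mathbb{Z}}$ themselves, which, as you correctly flag, may depend on the chosen splitting when there is $2$-torsion), your deduction is complete and is in fact more economical than a re-run of the geometric proof. Your closing remark that one can alternatively repeat the proof with integral classes and reduce mod $2$ recovers the paper's intended argument and is the version one would want if the explicit generators are needed downstream.
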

We can prove these two propositions as Theorem \ref{thm:7} (\ref{thm:7.1}). We omit their proofs. For example, as specific cases, in the case where the integral homology groups have no torsions, we have these facts by changing the coefficient rings.
\begin{proof}[A proof of Theorem \ref{thm:8}]

In Theorem \ref{thm:8}, Propositions \ref{prop:4} and \ref{prop:5} are key ingredients. We abuse the notation there and we also abuse the notation in Proposition \ref{prop:3} for example. \\
\ \\
STEP 1 Construction of a basis of $H_{2,\mathbb{Z}}(M;\mathbb{Z}/2\mathbb{Z})$. \\
\ \\
First we remember a main argument in the proof of Theorem \ref{thm:5} again.  $H_{2,\mathbb{Z}}(W_f;\mathbb{Z}/2\mathbb{Z})$ has a basis and each element $e_{2,j}$ of the basis is represented by a smooth embedding $e_{2,j}:S^2 \rightarrow {\rm Int}\ W_f \subset W_f$ where we abuse the notation $e_{2,j}$ for the embedding. We can also have the following objects and argue in the following way.
\begin{itemize}
	\item There exists a connected component of $C_0$ of the boundary $\partial W_f$ where we abuse the notation in the proof of Theorem \ref{thm:7}.
	\item There exists a smooth embedding $s_{2,j}:S^2 \rightarrow M$ enjoying the following properties as in the proof of Theorem \ref{thm:5}.
	\begin{itemize}
		\item The composition of this with $q_f:M \rightarrow W_f$ is smoothly homotopic to the original embedding $e_{2,j}$ of $S^2$ into ${\rm Int}\ W_f \subset W_f$. Let ${e_{2,j}}^{\prime}:S^2 \rightarrow W_f$ denote this.
		\item $s_{2,j}$ is a kind of variants of sections of the original bundle over $e_{2,j}(S^2) \subset {\rm Int}\ W_f$, whose projection is given by $q_f$. $W_f$ is a $5$-dimensional manifold smoothly immersed into ${\mathbb{R}}^5$ and spin as in the proof of Theorem \ref{thm:5}. $S^2$ is spin (by Theorem \ref{thm:3} for example). A normal bundle of (the embedding of) the $2$-dimensional sphere in ${\rm Int}\ W_f$ is shown to be trivial by a fundamental argument on linear bundles over $S^2$.  
		Thanks to the assumption that $M$ is spin together with a related fundamental argument, we can regard that the intersection of ${e_{2,j}}^{\prime}(S^2)$ and the boundary $\partial W_f$ is a finite set consisting of points of an even number and a subset of $C_0$ for a suitably chosen embedding $s_{2,j}$. 
	\end{itemize}
\item We have the cohomology dual ${e_{2,j}}^{\ast} \in H^2(W_f;\mathbb{Z}/2\mathbb{Z})$ to each element $e_{2,j} \in H_{2,\mathbb{Z}}(W_f;\mathbb{Z}/2\mathbb{Z}) \subset H_2(W_f;\mathbb{Z}/2\mathbb{Z})$ of the basis $\{e_{2,j}\}$ of $H_{2,\mathbb{Z}}(W_f;\mathbb{Z}/2\mathbb{Z})$ respecting it. The Poincar\'e dual ${\rm PD}({e_{2,j}}^{\ast}) \in H_3(W_f,\partial W_f;\mathbb{Z}/2\mathbb{Z})$ to ${e_{2,j}}^{\ast}$ is represented by a $3$-dimensional, compact, connected and orientable manifold $Y_{2,j}$ smoothly embedded in $W_f$. We can also assume the following properties as the proof of Theorem \ref{thm:5} presents.
\begin{itemize}
	\item The boundary $\partial Y_{2,j}$ is embedded in the boundary $\partial W_f$ and the interior ${\rm Int}\ Y_{2,j}$ is embedded in the interior ${\rm Int}\ W_{2,j}$. In other words, $Y_{2,j}$ is embedded properly in $W_f$. 
	\item ${q_f}^{-1}(Y_{2,j})$ can be regarded as a $4$-dimensional closed, connected and orientable manifold and regarded as the manifold of the domain of a special generic map into a $3$-dimensional, non-compact, connected and orientable manifold with no boundary.
\end{itemize}
 
\end{itemize}
As in the proof of Theorem \ref{thm:7}, we can take $l-1$ $1$-dimensional manifolds diffeomorphic to a closed interval which are smoothly and disjointly embedded in $W_f$ and we can argue in the following way.
\begin{itemize}
	\item There exists a basis of $H_1(W_f,\partial W_f;\mathbb{Z})$ and each of the elements of the basis is represented by one of the $l-1$ $1$-dimensional manifolds.
	\item For each of the $1$-dimensional manifolds, the interior is in the interior ${\rm Int}\ W_f$ and the boundary, consisting of exactly two points, is in the boundary $\partial W_f$. In other words, it is embedded properly in $W_f$.
	\item The connected component $C_0$ of the boundary $\partial W_f$, defined before, contains exactly one point of the boundary of each of these $l-1$ $1$-dimensional manifolds.
	\item For arbitrary distinct two $1$-dimensional manifolds of these $l-1$ manifolds, choose one suitable point in the boundary of each of the two, they are in distinct connected components of $\partial W_f$ which are not the connected component $C_0$.  
	\item The preimage of each of these $l-1$ $1$-dimensional manifolds is regarded as the $2$-dimensional sphere $S^{2,j}$ of the domain of a special generic map into $\mathbb{R}$ where we abuse the notation $S^{2,j}$ in the proof of Theorem \ref{thm:7}. Furthermore, all classes represented by the $l-1$ spheres in $\{S^{2,j}\}$ form a basis of a subgroup of $H_{2,\mathbb{Z}}(M;\mathbb{Z}/2\mathbb{Z}) \subset H_{2}(M;\mathbb{Z}/2\mathbb{Z})$ where the coefficient ring is $\mathbb{Z}/2\mathbb{Z}$. Let $e_{S^{2,j}} \in H_{2,\mathbb{Z}}(M;\mathbb{Z}/2\mathbb{Z}) \subset H_{2}(M;\mathbb{Z}/2\mathbb{Z})$ denote the element represented by $S^{2,j}$, an embedding of $S^2$.
\end{itemize}
    For the basis $\{e_{2,j}\}$ of $H_{2,\mathbb{Z}}(W_f;\mathbb{Z}/2\mathbb{Z})$ and each element, we have a smooth embedding $s_{2,j}:S^2 \rightarrow M$ making $q_f \circ s_{2,j}:M \rightarrow W_f$ and the given embedding $e_{2,j}:S^2 \rightarrow {\rm Int}\ W_f \subset W_f$ smoothly homotopic. Let $e_{s,2,j}$ denote the element of $H_{2,\mathbb{Z}}(M;\mathbb{Z}/2\mathbb{Z})$ represented by the embedding $s_{2,j}$. The disjoint union of the set $\{e_{S^{2,j_1}}\}$ and $\{e_{s,2,j_2}\}$ form a basis of $H_{2,\mathbb{Z}}(M;\mathbb{Z}/2\mathbb{Z})$ by virtue of Proposition \ref{prop:5}. Note that the subgroup generated by the basis $\{e_{S^{2,j_1}}\}$ is mapped to the trivial group by ${i_M}_{\ast}$ and ${q_f}_{\ast}={r_f}_{\ast} \circ {i_M}_{\ast}$ and that the subgroup generated by the basis $\{e_{s,2,j_2}\}$ is mapped to $H_{2,\mathbb{Z}}(W_f;\mathbb{Z}/2\mathbb{Z})$ by ${q_f}_{\ast}={r_f}_{\ast} \circ {i_M}_{\ast}$, which is regarded as an isomorphism from this subgroup to $H_{2,\mathbb{Z}}(W_f;\mathbb{Z}/2\mathbb{Z})$. We can define the cohomology dual ${e_{S^{2,j_1}}}^{\ast} \in H^2(M;\mathbb{Z}/2\mathbb{Z})$ to $e_{S^{2,j_1}}$ and the cohomology dual ${e_{s,2,j_2}}^{\ast} \in H^2(M;\mathbb{Z}/2\mathbb{Z})$ to $e_{s,2,j_2}$ respecting the bases. \\
    \ \\
STEP 2  The Poincar\'e duals to elements of the basis $\{e_{S^{2,j_1}}\} \sqcup \{e_{s,2,j_2}\}$ of $H_{2,\mathbb{Z}}(M;\mathbb{Z}/2\mathbb{Z})$. \\
 \ \\     
    Let 
    $e_{4,2,C} \in H_4(M;\mathbb{Z}/2\mathbb{Z})$ denote the homology class represented by a connected component $C$ of the singular set of $f$. Let $e_{4,2,Y_j} \in H_4(M;\mathbb{Z}/2\mathbb{Z})$ denote the homology class represented by the $4$-dimensional closed, connected and orientable manifold ${q_f}^{-1}(Y_{2,j})$.

    We can do so that ${q_f}^{-1}(Y_{2,j_1}) \bigcap S^{2,j_2}$ is empty for any $j_1$ and $j_2$. $Y_{2,j_1}$ is $3$-dimensional and embedded smoothly and properly in $W_f$.
     $S^{2,j_2}$ is regarded as the preimage of a $1$-dimensional manifold diffeomorphic to a closed interval and embedded smoothly and properly in $W_f$. $W_f$ is $5$-dimensional. The relation $3+1=4<5$ implies this. 
     
     $S^{2,j} \bigcap C$ is empty for every connected component $C$ of the singular set except the connected component $C_0$ and one another connected component $C_{j^{\prime}}$. For each connected component $C$ of these two, this set is a one-point set. For distinct spheres $S^{2,j_1}$ and $S^{2,j_2}$, the connected components $C_{{j_1}^{\prime}}$ and $C_{{j_2}^{\prime}}$ which are not $C_0$ are distinct. This reviews the definitions.
     
      Thus $e_{4,2,C_{j^{\prime}}}$ is the Poincar\'e dual to ${e_{S^{2,j}}}^{\ast}$.
      
      We remember some properties of the embedding 
      $s_{2,j}:S^2 \rightarrow M$. 
      $s_{2,j}(S^2) \bigcap C$ is empty for every connected component $C$ of the singular set except the connected component $C_0$. $s_{2,j}(S^2) \bigcap C_0$ is a finite set whose size is an even number. $s_{2,j_1}(S^2) \bigcap {q_f}^{-1}(Y_{2,j_2})$ can be regarded as a finite subset of $M$. The size of this finite set is odd if and only if $j_1=j_2$. Here, we consider Poincar\'e duality for $W_f$ and based on this, we consider Poincar\'e duality for the manifold $M$.
      
      This implies that $e_{4,2,Y_j}$ is the Poincar\'e dual to ${e_{s,2,j}}^{\ast}$.
            \\
      \ \\
STEP 3  The square $u^{\ast} \cup u^{\ast}$ of the cohomology dual $u^{\ast} \in H^2(M;\mathbb{Z}/2\mathbb{Z})$ to each element $u$ in the basis $\{e_{S^{2,j_1}}\} \sqcup \{e_{s,2,j_2}\}$ of $H_{2,\mathbb{Z}}(M;\mathbb{Z}/2\mathbb{Z})$ respecting it. \\    
\ \\
As we have done in several scenes in the present paper, we consider the Poincar\'e duals and investigate self-intersections.

Remember arguments on the square ${e_s}^{\ast} \cup {e_s}^{\ast}$ in the proof of Theorem \ref{thm:5}. We can argue similarly to know that the square ${e_{s,2,j}}^{\ast} \cup {e_{s,2,j}}^{\ast}$ is the zero element of $H^4(M;\mathbb{Z}/2\mathbb{Z})$. More precisely, if we consider homology classes where the coefficient ring is $\mathbb{Z}$, then we have a $4$th integral cohomology class or element of $H^4(M;\mathbb{Z})$ which is divisible by $2$. 

According to fundamental propositions in section 3 such as Lemma 3.7 in \cite{saeki}, the tangent bundle of $C$ and the trivial linear bundle whose fiber is diffeomorphic to $\mathbb{R}$ is trivial for any connected component $C$ of the singular set $S(f)$. The tangent bundle of $C$ is spin by a fundamental argument. $M$ is assumed to be spin. We can see that the linear bundle whose fiber is the $2$-dimensional unit disk $D^2$ in Proposition \ref{prop:2} (\ref{prop:2.4}) is also spin by a fundamental argument. About self-intersections here, we can argue as in the previous case and the square
${e_{S^{2,j}}}^{\ast} \cup {e_{S^{2,j}}}^{\ast}$ is also the zero element. If we consider the case where the coefficient ring
is $\mathbb{Z}$, we have a similar result.

This completes the proof.    

\end{proof}
We easily have Theorem \ref{thm:4} as a corollary to Theorem \ref{thm:8}.

We present meaningful related examples.

\begin{Ex}
	\label{ex:2}
	\begin{enumerate}
		\item \label{ex:2.1}
		We generalize the construction of a special generic map, just after Proposition \ref{prop:2} and just before Example \ref{ex:1}, explicitly. For more precise expositions, see the original paper \cite{saeki} and \cite{kitazawa,kitazawa2,kitazawa3,kitazawa4} by the author for example.
		
		Let $(m,n)=(6,5)$ and we can take $W_f:=S^2 \times S^2 \times D^1$ smoothly embedded in ${\mathbb{R}}^5$ by $\bar{f_0}$.
		
		By considering two trivial bundles in the fifth property, we have a special generic map $f_0$ from $M_0:=S^2 \times S^2 \times S^2$ into ${\mathbb{R}}^5$. 
		Linear bundles whose fibers are circles and whose structure groups consist of linear transformations preserving the orientations of the fibers have been discussed shortly and explicitly in our paper. We apply fundamental arguments on such linear bundles and we can replace the two trivial bundles by suitable linear bundles to obtain a special generic map $f_0$ from $M_0:=(S^2 \tilde{\times} S^2) \times S^2$ into ${\mathbb{R}}^5$ where $S^2 \tilde{\times} S^2$ denotes the total space of a linear bundle whose fiber is diffeomorphic to $S^2$ and which is not trivial.  
		The total space is known to be not spin and diffeomorphic to a manifold represented as a connected sum of two copies of ${{\mathbb{C}}P}^2$ where we consider ${\mathbb{C}P}^2$ as a canonically oriented manifold, reverse the orientation for exactly one of the two copies and consider the connected sum in the smooth category. This new manifold $M_0$ is not spin. This does not satisfy the assumption of Theorem \ref{thm:5} or Theorem \ref{thm:6}.
		
				\item \label{ex:2.2}\cite{choimasudasuh} is on classifications of so-called ({\it generalized}) {\it Bott manifolds}. Complex projective spaces are simplest generalized Bott manifolds. Such manifolds are also so-called {\it toric} manifolds.
				A {\it Bott manifold} is, in short, a closed and simply-connected smooth manifold obtained by a finite iteration of taking linear bundles whose fibers are the $2$-dimensional unit sphere of a certain class which respects the complex structures of the manifolds. The product of finitely many copies of $S^2$ is always a Bott manifold. $S^2 \tilde{\times} S^2$ and the $6$-dimensional manifold ${M_0}^{\prime}:=(S^2 \tilde{\times} S^2) \times S^2$, discussed in {\rm (}\ref{ex:2.1}{\rm )}, are also Bott manifolds.
				
				In section 7 of \cite{choimasudasuh}, from three integers "$a$","$b$","$c$", a $6$-dimensional Bott manifold is uniquely obtained. By taking the first two integers as arbitrary odd numbers and the last integer an arbitrary even number, we have a spin manifold satisfying the assumption of Theorem \ref{thm:8}. From Lemma 7.3 there, we can see that we have a family of countably many such $6$-dimensional Bott manifolds such that distinct manifolds are not homeomorphic. They are distinguished by the 1st Pontrjagin classes. The 1st Pontrjagin class of the $6$-dimensional (oriented) Bott manifold is represented by $4k$ times an element of some $4$th integral cohomology class which is not divisible by any integer greater than $1$. Furthermore, we can realize every integer $k$ by such $6$-dimensional spin Bott manifolds satisfying the assumption of Theorem \ref{thm:8}.
	\end{enumerate}
	
\end{Ex}
Note that $6$-dimensional manifolds in Example \ref{ex:2} do not have special generic maps into ${\mathbb{R}}^n$ for $n=1,2,3,4$ by Theorem \ref{thm:2}. For example, the new manifold $M_0$ in Example \ref{ex:2} (\ref{ex:2.1}) does not admit special generic maps into ${\mathbb{R}}^6$ by the theory \cite{eliashberg}. If the 1st Pontrjagin class of a (suitably oriented) manifold there is not the zero element, then it does not admit special generic maps into ${\mathbb{R}}^6$ by the same reason. 
Example \ref{ex:2} (\ref{ex:2.2}) completes the proof of Main Theorem \ref{mthm:2}.
 \section{Remarks.}
We close our paper by several remarks.

\begin{Rem}
	\label{rem:2}
	The $2$-dimensional complex projective space ${\mathbb{C}P}^2$ does not admit special generic maps into ${\mathbb{R}}^n$ for $n=1,2,3$ by Theorem \ref{thm:1} (\ref{thm:1.1}) and (\ref{thm:1.2}), Theorem \ref{thm:2}, Theorem \ref{thm:3} and Corollary \ref{cor:1} (\ref{cor:1.2}). It does not admit ones into ${\mathbb{R}}^4$, which follows from the theory \cite{eliashberg} or \cite{kikuchisaeki}.

	According to \cite{kikuchisaeki}, if a closed manifold whose Euler number is odd admits a special generic map into an Euclidean space, then the dimension of the Euclidean space must be $1$, $3$ or $7$. This theorem is, more generally, one for {\it fold} maps: the class of {\it fold} maps is a natural extension of the class of Morse functions and that of special generic maps. 
	
	A {\it fold} map is, in short, defined as a smooth map locally represented as the product maps of the Morse functions and the identity maps on open disks. \cite{golubitskyguillemin} presents some singularity theory on singular points of fold maps and more general generic smooth maps. \cite{thom2, whitney} are pioneering studies on fold maps or more general generic smooth maps into the plane on closed manifolds whose dimensions are greater than or equal to $2$. \cite{kitazawa0.1, kitazawa0.2, kitazawa0.3, kitazawa} are on {\it round} fold maps, defined as fold maps whose singular value sets are embedded concentric spheres and defined first by the author in \cite{kitazawa0.1, kitazawa0.2, kitazawa0.3}.  
\end{Rem}	

\begin{Rem}
	\label{rem:3}
	The existence and non-existence of fold maps on closed manifolds have been natural, important, difficult and attractive problems. Morse functions always exist. Fold maps into ${\mathbb{R}}^2$ exist on a closed manifold whose dimension is greater than or equal to $2$ if the Euler number is even.
	Eliashberg's theory \cite{eliashberg,eliashberg2} has solved problems in considerable cases. The triviality of the Whitney sum of the tangent bundle and a trivial linear bundle whose fiber is $\mathbb{R}$ is a strong sufficient condition for the existence. Later \cite{ando} has given generalized answers. 
	
	For projective spaces, such problems are still difficult to solve in considerable cases. For example, we do not know the existence or the non-existence of fold maps into ${\mathbb{R}}^5$ on ${\mathbb{C}P}^3$. We know the existence for $n=1,2,3,4$. 
	For this, see also construction of explicit fold maps on the total spaces of smooth bundles over standard spheres in \cite{kitazawa0.1,kitazawa0.2,kitazawa} for example.
	For ${\mathbb{C}P}^2$, we know the non-existence of fold maps into ${\mathbb{R}}^n$ for $n=2,3,4$. For the case $n=3$, see also \cite{saeki3}.
	
	For related studies, see \cite{ohmotosaekisakuma,sadykovsaekisakuma}, for example.

\end{Rem}
\begin{Rem}
\label{rem:4}
We can know the non-existence of special generic maps on the $k$-dimensional complex projective space ${\mathbb{C}P}^k$ into ${\mathbb{R}}^n$ for $1 \leq n \leq 2k-2$ and $n=2k$ where $k \geq 2$ by Theorems \ref{thm:2} and \ref{thm:3}. We can know the non-existence of special generic maps on ${\mathbb{C}P}^k$ into ${\mathbb{R}}^n$ for $1 \leq n \leq 2k$ where $k \geq 2$ is an even integer. This is due to Proposition \ref{prop:3}, implying that closed manifolds admitting special generic maps into lower dimensional spaces must bound compact manifolds, whereas ${{\mathbb{C}}P}^k$ does not enjoy this property for any even integer $k \geq 2$. We do not know the (non-)existence of special generic maps on ${\mathbb{C}P}^k$ into ${\mathbb{R}}^{2k-1}$ where $k \geq 5$ is an odd integer.
\end{Rem}

	Related to our remarks here, we present our additional main result.
	\begin{MainThm}
		\label{mthm:3}
For any $4$-dimensional smooth closed and simply-connected $M^{\prime}$ having an element $u_0 \in H^2(M^{\prime};\mathbb{Z})$ whose square $u_0 \cup u_0 \in H^4(M^{\prime};\mathbb{Z})$ is not divisible by $2$, we have a $6$-dimensional closed and simply-connected manifold $M$ for Theorem \ref{thm:8} enjoying the following properties. 
		\begin{enumerate}
\item $M$ is the total space of a linear bundle over $M^{\prime}$ whose fiber is the $2$-dimensional unit sphere. $H_j(M;\mathbb{Z})$ and $H_j(M^{\prime};\mathbb{Z})$ are free for any integer $j$. 
$H_j(M;\mathbb{Z})$ and $H^j(M;\mathbb{Z})$ are isomorphic and $H_j(M^{\prime};\mathbb{Z})$ and $H^j(M^{\prime};\mathbb{Z})$ are isomorphic for any integer $j$. $H_j(M^{\prime};\mathbb{Z})$ is trivial for $j=1,3$. The rank of $H_j(M;\mathbb{Z})$ is as follows.
\begin{enumerate}
\item The rank is $0$ for $j \neq 0,2,4,6$.
\item The rank is $1$ for $j=0,6$.
\item The rank is the sum of the rank of $H_2(M^{\prime};\mathbb{Z})$ and $1$.
\end{enumerate}
\item There exists a homomorphism $\phi:H^{\ast}(M;\mathbb{Z}) \rightarrow H^{\ast}(M^{\prime};\mathbb{Z})$ simply seen as one between graded modules over $\mathbb{Z}$ such that for any ${u}^{\prime} \in H^{\ast}(M^{\prime};\mathbb{Z})$ we have a suitable element $u \in H^{\ast}(M;\mathbb{Z})$ satisfying $\phi(u)={u}^{\prime}$. 
			\item $M$ admits a fold map into ${\mathbb{R}}^n$ for $n=1,2,3,4,5,6$.
			\item $M$ admits a special generic map into ${\mathbb{R}}^n$ if and only if $n=6$.
		\end{enumerate}
	\end{MainThm}
\begin{proof}
Thanks to the classical theory of smooth immersions, we can smoothly immerse $M^{\prime}$ into ${\mathbb{R}}^7$. We have a so-called {\it normal bundle} as a linear bundle over $M^{\prime}$ whose fiber is the unit disk $D^3$.
We define $M$ as the boundary of the total space. The first property is on algebraic topological properties and follows from fundamental arguments on linear bundles and algebraic topology.
We discuss the second property. For  any ${u}^{\prime} \in H^{j}(M^{\prime};\mathbb{Z})$, we can have a unique element Poincar\'e dual to it (where the manifolds are suitably oriented). For $0 \leq j \leq 4$, it is an element of $H_{4-j}(M^{\prime};\mathbb{Z})$ and represented by a smooth compact and connected orientable submanifold $S_{u^{\prime}}$ with no boundary. We consider the restriction $S_u$ of the bundle $M$ over $M^{\prime}$ to the submanifold $S_{u^{\prime}}$. We can obtain a unique element Poincar\'e dual to the element represented by $S_u$. This is our desired $u \in H^{\ast}(M;\mathbb{Z})$ to have our desired homomorphism $\phi:H^{\ast}(M;\mathbb{Z}) \rightarrow H^{\ast}(M^{\prime};\mathbb{Z})$ enjoying the relation $\phi(u)={u}^{\prime}$.
The third property follows from the fact that the tangent bundle of $M$ is easily seen to be trivial by fundamental arguments on differential topology. \cite{eliashberg,eliashberg2} complete the proof of this property. Note that in the third property here, in the case $n=6$, a fold map is a special generic map. For the fourth property, in the case where the dimension $n$ of the Euclidean space of the target is $n=1,2,3,4$, we apply Theorems \ref{thm:1} and \ref{thm:2} to complete its proof. The second property, the existence of $u_0$ and the trivial tangent bundle show that $M$ is for Theorem \ref{thm:8}. Theorem \ref{thm:8} with the third property for the dimension $n=6$ of the Euclidean space of the target completes the proof of the fourth property here. This completes the proof.
\end{proof}
\begin{MainThm}
\label{mthm:4}
A $6$-dimensional generalized Bott manifold does not admit special generic maps into ${\mathbb{R}}^n$ for $n=1,2,3,4$.
A $6$-dimensional generalized Bott manifold which is also spin admits special generic maps into ${\mathbb{R}}^5$ if and only if it is represented as the total space of a linear bundle over $S^2 \times S^2$ whose fiber is diffeomorphic to $S^2$.
\end{MainThm}
\begin{proof}
We do not explain about generalized Bott manifolds rigorously. 
This is a manifold obtained by a finite iteration of taking smooth bundles whose fibers are complex projective spaces. More precisely, we need to consider specific bundles: we need to consider a Whitney sum of bundles whose fibers are the complex line $\mathbb{C}$ and whose structure groups consist of complex linear transformations and consider the projectivization. 
For related exposition, see \cite{choimasudasuh} for example.

The former statement follows from Theorem \ref{thm:1} or Theorem \ref{thm:2}.

We discuss the latter statement. Theorem \ref{thm:8} restricts the manifold to be the total space of linear bundle over $S^2 \times S^2$ whose fiber is diffeomorphic to $S^2$.
Conversely, for the total space of a linear bundle over $S^2 \times S^2$ whose fiber is diffeomorphic to $S^2$ which may not be spin or a generalized Bott manifold, we can reconstruct a special generic map as presented just after Proposition \ref{prop:2}.
More precisely, we prepare a smooth embedding $\bar{f}:S^2 \times S^2 \times [-1,1] \rightarrow {\mathbb{R}}^5$ and apply the presented method in a bit generalized manner, which is presented in the original article: we consider bundles which may not be trivial.
This completes the proof of the latter statement.

This completes the proof. 
\end{proof}
\section{Acknowledgement.}
The author would like to thank Osamu Saeki for an interesting informal seminar on \cite{kitazawa3} and the present paper, which applies some arguments in \cite{kitazawa3} with new ones.
The author was a member of the project supported by JSPS KAKENHI Grant Number JP17H06128 "Innovative research of geometric topology and singularities of differentiable mappings"
(Principal investigator: Osamu Saeki). The present study was supported by the project. The author was also a member of the project JSPS KAKENHI Grant Number JP22K18267 "Visualizing twists in data through monodromy" (Principal Investigator: Osamu Saeki). Our study thanks this project for supports. The present study is also supported by the project. At present, the author works at Institute of Mathematics for Industry (https://www.jgmi.kyushu-u.ac.jp/en/about/young-mentors/). This is closely related to the present study. The author is also a researcher at Osaka Central
Advanced Mathematical Institute (OCAMI researcher), supported by MEXT Promotion of Distinctive Joint Research Center Program JPMXP0723833165. He is not working there. However this helps the studies and our study also thanks this. \\

We declare that data essentially supporting our present study are all in the present paper.

\begin{thebibliography}{30}
\bibitem{ando} Y. Ando, \textsl{Existence theorems of fold maps}, Japan J. Math. 30 (2004), 29--73.
\bibitem{burletderham} O. Burlet and G. de Rham, \textsl{Sur certaines applications g\'en\'eriques d'une vari\'et\'e close a $3$ dimensions dans le plan}, Enseign. Math. 20 (1974). 275--292.
\bibitem{calabi} E. Calabi, Quasi-surjective mappings and a generation of Morse theory, Proc. U.S.-Japan Seminar in Differential Geometry, Kyoto, 1965, pp. 13--16.
\bibitem{choimasudasuh} S. Choi, M. Masuda and D. Y. Suh, \textsl{Topological classification of generalized Bott towers}, Trans. Amer. Math. Soc. 362 (2010), 1097--1112.
\bibitem{eellskuiper} J. J. Eells and N. H. Kuiper, \textsl{An invariant for certain smooth manifolds}, Ann. Mat. Pura Appl. 60 (1962), 93--110.
\bibitem{eliashberg} Y. Eliashberg, \textsl{On singularities of folding type}, Math. USSR Izv. 4 (1970). 1119--1134.
\bibitem{eliashberg2} Y. Eliashberg, \textsl{Surgery of singularities of smooth mappings}, Math. USSR Izv. 6 (1972). 1302--1326.
\bibitem{furuyaporto} Y. K. S. Furuya and P. Porto, \textsl{On special generic maps from a closed manifold into the plane}, Topology Appl. 35 (1990), 41--52.
\bibitem{golubitskyguillemin} M. Golubitsky and V. Guillemin, \textsl{Stable mappings and their singularities}, Graduate Texts in Mathematics (14), Springer-Verlag (1974).
\bibitem{hatcher} A. Hatcher, \textsl{Algebraic Topology}, A modern, geometrically flavored introduction to algebraic topology, Cambridge: Cambridge University Press (2002).  
\bibitem{jupp}  P. E. Jupp, \textsl{Classification of certain $6$-manifolds}, Proc. Cambridge Philos. Soc. 73 (1973), 293--300.
\bibitem{kervairemilnor} M. Kervaire and J. W. Milnor, \textsl{Groups of homotopy spheres : I}, Ann. of Math.,, 77 (1963), 504--537.
\bibitem{kikuchisaeki} S. Kikuchi and O. Saeki, \textsl{Remarks on the topology of folds}, Proc. Amer. Math, Soc. No.3 123 (1995), 905--908.
\bibitem{kitazawa0.1} N. Kitazawa, \textsl{On round fold maps} (in Japanese), RIMS Kokyuroku Bessatsu B38 (2013), 45--59.
\bibitem{kitazawa0.2} N. Kitazawa, \textsl{On manifolds admitting fold maps with singular value sets of concentric spheres}, Doctoral Dissertation, Tokyo Institute of Technology (2014).
\bibitem{kitazawa0.3} N. Kitazawa, \textsl{Fold maps with singular value sets of concentric spheres}, Hokkaido Mathematical Journal Vol.43, No.3 (2014), 327--359.
\bibitem{kitazawa} N. Kitazawa, \textsl{Constructions of round fold maps on smooth bundles}, Tokyo J. of Math. Volume 37, Number 2, 385--403, arXiv:1305.1708.
\bibitem{kitazawa1} N. Kitazawa, \textsl{Special generic maps and fold maps and information on triple Massey products of higher dimensional differentiable manifolds}, arXiv:2006.08960.
\bibitem{kitazawa2} N. Kitazawa, \textsl{Closed manifolds admitting no special generic maps whose codimensions are negative and their cohomology rings}, submitted to a refereed journal, arXiv:2008.04226v5.
\bibitem{kitazawa3} N. Kitazawa, \textsl{Restrictions on special generic maps on $6$-dimensional or higher dimensional closed and simply-connected manifolds}, arXiv:2201.09437v4.
\bibitem{kitazawa4} N. Kitazawa, \textsl{A note on cohomological structures of special generic maps}, a revised version is submitted based on positive comments by referees and editors after the third submission to a refereed journal.
%
\bibitem{milnor} J. Milnor, \textsl{On manifolds homeomorphic to the $7$-sphere}, Ann. of Math. (2) 64 (1956), 399--405.
\bibitem{milnorstasheff} J. Milnor and J. Stasheff, \textsl{Characteristic classes}, Annals of Mathematics Studies, No. 76. Princeton, N. J; Princeton University Press (1974).
\bibitem{nishioka} M. Nishioka, \textsl{Special generic maps of $5$-dimensional manifolds}, Revue Roumaine de Math`{e}matiques Pures et Appliqu\`{e}es, Volume LX No.4 (2015), 507--517.
\bibitem{ohmotosaekisakuma} T. Ohmoto, O. Saeki and K. Sakuma, \textsl{Self-intersection class for singularities and its application to fold maps}, Trans. Amer. Math. Soc. 355 (9) (2003), 3825--3838. 
\bibitem{sadykovsaekisakuma} R. Sadykov, O. Saeki and K. Sakuma, \textsl{Obstructions to the existence of fold maps}, Journal of the London Math. Soc. Second Series 81 (2) (2010), 338--354. 
\bibitem{saeki} O. Saeki, \textsl{Topology of special generic maps of manifolds into Euclidean spaces}, Topology Appl. 49 (1993), 265--293.
\bibitem{saeki2} O. Saeki, \textsl{Topology of special generic maps into $\mathbb{R}^3$}, Workshop on Real and Complex Singularities (Sao Carlos, 1992), Mat. Contemp. 5 (1993), 161--186.
\bibitem{saeki3} O. Saeki, \textsl{Fold maps on $4$-manifolds}, Comment. Math. Helv. 78 (2003), 627--647.
\bibitem{saekisakuma} O. Saeki and K. Sakuma, \textsl{On special generic maps into ${\mathbb{R}}^3$}, Pacific J. Math. 184 (1998), 175--193.
\bibitem{saekisakuma2} O. Saeki and K. Sakuma, \textsl{Special generic maps of $4$-manifolds and compact complex analytic surfaces}, Math. Ann. 313, 617--633, 1999.
\bibitem{saekisuzuoka} O. Saeki and K. Suzuoka, \textsl{Generic smooth maps with sphere fibers} J. Math. Soc. Japan Volume 57, Number 3 (2005), 881--902.
\bibitem{sakuma} K. Sakuma, \textsl{On special generic maps of simply connected $2n$-manifolds into ${\mathbb{R}}^3$},Topology Appl. 50 (1993), 249--261.
\bibitem{steenrod} N. Steenrod, \textsl{The topology of fibre bundles}, Princeton University Press (1951). 
\bibitem{thom} R. Thom, \textsl{Quelques propri\'et\'es globales des vari\'et\'es diff\'erentiables}, Commentarii Mathematici Helvetici (1954), Volume 28, 17--86.
\bibitem{thom2} R. Thom, \textsl{Les singularites des applications differentiables}, Ann. Inst. Fourier (Grenoble) 6 (1955-56), 43--87.
\bibitem{wall} C. T. C. Wall, \textsl{Classification problems in differential topology. V. On certain $6$-manifolds}, Invent. Math. 1 (1966), 355--374. 
\bibitem{whitney} H. Whitney, \textsl{On singularities of mappings of Euclidean spaces: I, mappings of the plane into the plane}, Ann. of Math. 62 (1955), 374--410.
\bibitem{wrazidlo} D. J. Wrazidlo, \textsl{Standard special generic maps of homotopy spheres into Euclidean spaces}, Topology Appl. 234 (2018), 348--358, arXiv:1707.08646.
\bibitem{wrazidlo2} D. J. Wrazidlo (including a joint work with O. Saeki and K. Sakuma), \textsl{The Milnor $7$-sphere does not admit a special generic map into ${\mathbb{R}}^3$},  http://math.ac.vn/conference/FJV2018/images/slides/Wardzilo\_FJV2018.pdf, 2018. 
\bibitem{wrazidlo3} D. J. Wrazidlo, \textsl{On special generic maps of rational homology spheres into
 Euclidean spaces}, Pure and Applied Mathematics Quarterly, Volume 19, Number 2 (2023), 713--730, arXiv:2009.05928.
\bibitem{zhubr} A. V. Zhubr, Closed simply-connected six-dimensional manifolds: proofs of classification theorems, Algebra i Analiz 12 (2000), no. 4, 126--230.
\bibitem{zhubr2} A. V. Zhubr (responsible for the page), http://www.map.mpim-bonn.mpg.de/6-manifolds:\_1-connected.
\end{thebibliography}
\end{document}